\documentclass[11pt,reqno]{amsart}
\usepackage{dsfont}
\usepackage{graphicx}
\usepackage{latexsym}
\usepackage{amssymb}
\usepackage{amsmath}
\usepackage{enumerate}
\usepackage{mathrsfs}
\usepackage{cases}
\usepackage{footmisc}

\topmargin=0in
\oddsidemargin=0in
\evensidemargin=0in
\textwidth=6.5in
\textheight=8.5in

\allowdisplaybreaks[4]

\RequirePackage[colorlinks,citecolor=blue,urlcolor=blue,backref=false]{hyperref}

\newtheorem{theorem}{Theorem}[section]
\newtheorem{prop}[theorem]{Proposition}
\newtheorem{lemma}[theorem]{Lemma}

\newtheorem{remark}[theorem]{Remark}

\usepackage{tikz}
\usepackage[english]{babel}
\usepackage[T1]{fontenc}
\usepackage[utf8]{inputenc}

\numberwithin{equation}{section}

\newcommand{\E}{\mathbb{E}}
\newcommand{\R}{\mathbb{R}}
\newcommand{\N}{\mathbb{N}}
\newcommand{\X}{\mathbb{X}}
\newcommand{\Q}{\mathbb{Q}}
\renewcommand{\P}{\mathbb{P}}
\newcommand{\md}{{\rm d}}
\newcommand{\p}{\operatorname{pow}}

\newcommand{\Var}{\operatorname{Var}}

\def\cF{\mathcal{F}}
\def\cM{\mathcal{M}}
\def\cN{\mathcal{N}}
\def\cL{\mathcal{L}}
\def\cE{\mathcal{E}}

\def\bN{\mathbf{N}}

\def \ind{\mathds{1}}

\makeatletter
\def\namedlabel#1#2{\begingroup
    #2%
    \def\@currentlabel{#2}%
    \phantomsection\label{#1}\endgroup
}
\makeatother

\usepackage{color}

%Chinmoy's color in main
  %Chinmoy's margin comments

%Anna's color in main
  %Anna's margin comments

% \title{Gaussian approximation in Delaunay tessellations}
\title{Gaussian approximation for Extreme Points in Laguerre tessellations}
\author{Chinmoy Bhattacharjee}%\footnotemark[1],

\address{Chinmoy Bhattacharjee: Universität Hamburg,
Bundesstraße 55,
20146 Hamburg, Germany}
\email{chinmoy.bhattacharjee@uni-hamburg.de}

   \author{Anna Gusakova}%\footnotemark[2]}
   \address{Anna Gusakova:
Univeristy of M\"unster,
Orl\'eans-Ring 10,
48149 M\"unster, Germany}
\email{gusakova@uni-muenster.de}
\date{\today}

 % \footnotetext[1]{}
 % \footnotetext[2]{M\"unster University, Germany. Email: gusakova@uni-muenster.de}

\keywords{central limit theorem; crystal growth process; extreme point; Kolmogorov distance; Laguerre tessellation; Poisson functional; region of stabilization; Wasserstein distance}
\subjclass[2020]{Primary: 60D05, 60F05; Secondary: 60G55}
\begin{document}
\begin{abstract}
    We consider Gaussian approximation in three particular models of Poisson-Laguerre tessellations, namely, the $\beta$-, $\beta'$- and Gaussian-Voronoi tessellations. The tessellations are constructed based on inhomogeneous Poisson point processes in space-time $\R^d\times \R$, where some of the points of the process give rise to a cell in $\R^d$, known as extreme points, while the other points produce an empty cell. Using the notion of region-stabilization, we derive quantitative central limit theorems with presumably optimal rates of convergence for the number of extreme points of $\beta$-, $\beta'$- and Gaussian-Voronoi tessellations in a growing window $W_n=[-n,n]^d$ as $n\to\infty$. Our bounds improve and extend previously known results by Schreiber and Yukich (2008) for the $\beta$-model, and are the first quantitative results for the $\beta'$- and Gaussian models.
\end{abstract}

\maketitle

\section{Introduction}

A random tessellation in $\R^d$ is a locally finite collection of non-empty convex polytopes with disjoint interiors that together cover the entire space. Random tessellations are among the central objects of study in stochastic geometry and have been intensively investigated for decades. Patterns similar to those described by random tessellations naturally arise in biology, cosmology, and materials science, making them appealing models for various natural phenomena (see, for example, \cite{BPR25, LR22, Gibbs} and references therein). At the same time, tessellation models have found applications in actively developing areas of computer science and machine learning, particularly in spatial data structures and geometric inference \cite{ORT24}. For a broad overview of different random tessellation models and modeling techniques, we refer the reader to \cite{RJ25}.

\subsection*{Model.} One of the earliest and most fundamental random tessellation models is the Poisson-Voronoi tessellation. Let $\eta$ be a homogeneous Poisson point process in $\R^d$, then for any point $v\in\eta$ we define its Voronoi cell as
\[
V(v)=\{w\in\R^d\colon \|v-w\|\leq \|v'-w\|\, \forall v'\in \eta\}.
\]
The Poisson-Voronoi tessellation arises as the collection of all such Voronoi cells, which are convex polytopes (see \cite{CSKM} and \cite[Chapter 10]{SW} for further details). This model can be interpreted as a crystal growth process, where each nucleus $v\in\eta$ represents a crystal center, all crystals get activated at the same time and grow with the same speed in every direction. Each point $w\in\R^d$ belongs to the crystal that reaches it first. 

A natural generalization of this construction is the so-called \textit{Laguerre tessellation}. Here as generating points, we consider a set $A$ of pairs (seeds) $x:=(v,h)\in\R^d\times \R$ with $v\in\R^d$ being the center of its crystal (spacial coordinate) and $h$ being its activation time (time coordinate). The crystal centered at $v$ gets activated at time $h$ and subsequently grows isotropically with the same decaying speed $1/(2\sqrt{t})$ after time $t>0$ from its activation. A point $w \in \R^d$ is included in the cell of $x =(v,h) \in A$ if its
crystal reaches $w$ first (see Section \ref{sec:Laguerre} for more details). Formally, the cell (crystal) of the seed $x=(v,h)\in A$ is defined as
\begin{equation}\label{eq:cell}
C(x, A):= \{w \in \R^d: \p(w,x) \le \p(w,x') \;\; \forall x'=(v',h') \in A\},
\end{equation}
where
$$
\p(w,(v,h)) := \|w-v\|^2 + h,
$$
is the power of a point $w \in \R^d$ w.r.t.\ $(v,h)$. Note that unlike Voronoi cells, Laguerre cells need not be non-empty or contain their centers. Hence, the object that interests us is the so-called Laguerre tessellation
\begin{equation}\label{eq:Lag}
    \cL(A)=\{C(x, A)\colon x\in\eta, {\rm int}\, C(x, \eta)\neq \emptyset\},
\end{equation}
the collection of all cells with a non-empty interior. We call a point $x \in A$ an \textit{extreme point} if it forms a cell in $\cL(A)$, that is, when ${\rm int}\, C(x, A)\neq \emptyset$.  It should be noted that $\cL(A)$ is not always a tessellation in the sense of the definition given above, since for a general set $A$, cells of $\cL(A)$ may not necessarily be bounded and the collection of cells may not necessarily be locally-finite. However, if $A$ satisfies some additional mild conditions, these issues do not arise \cite{Sch93}. We refer the reader to \cite[Chapter 17]{BookAlgGeom}, \cite{RJ25} and the references therein for further background on Laguerre tessellations. For applications of this specific model of random tessellations, we refer the reader to, for example, \cite{BPR25} and references therein.

In this paper, we study the probabilistic properties of random Laguerre tessellations, where the generating set is given by a Poisson point process in space-time. More precisely, let $\eta$ be a Poisson point process in $\R^d \times E$, $E\subset \R$, equipped with the Borel $\sigma$-algebra and intensity measure $\Q=\lambda \otimes F$, where $\lambda$ denotes the Lebesgue measure on $\R^d$ and $F$ is a locally finite Borel measure on $E$. Note here that in general, we let our time space be the whole $\R$ or an open half interval $(-\infty,b)$, $b\in\R$. The random Laguerre tessellation $\cL(\eta)$ for $F$ a probability measure on $(-\infty,0)$ satisfying some natural integrability assumptions was first introduced and studied in \cite{LZ08}. More recently, the case when $F$ has a density $f$ with respect to Lebesgue measure satisfying some additional mild conditions was treated in \cite{GiWL25}, while nonparametric inference in the case when $F$ is locally finite measure on $(0,\infty)$ was considered in \cite{vdJJV25}. 

Among the possible choices for the density function $f$, the following three families have attracted particular attention due to their tractability and rich geometric behaviour:
\begin{enumerate}
    \item For $\beta \in (-1,\infty)$, let $\eta_\beta$ denote a Poisson point process in $\R^d \times [0,\infty)$ with intensity measure $\Q_{\beta}$ given by
\[
\gamma\,c_{d,\beta}h^\beta \md h \md v,\qquad \gamma>0,\quad c_{d,\beta}={\Gamma({d\over 2}+\beta+{3\over 2})\over \pi^{d+1\over 2}\Gamma(\beta+1)}.
\]
% $v \in \R^d$ and $h > 0$ stands for the spatial, and time/height coordinates, respectively, of a point $x = (v, h) \in \R^d\times\R$.
\item For $\beta\in(d/2+1,\infty)$, let $\eta'_\beta$ denote a Poisson point process in $\R^d \times (-\infty,0)$ with intensity measure $\Q'_{\beta}$ given by 
\[
\gamma c'_{d,\beta}(-h)^{-\beta} \md h \md v,\qquad \gamma>0,\quad c'_{d,\beta}={\Gamma(\beta)\over \pi^{d+1\over 2}\Gamma(\beta-{d+1\over 2})}.
\]
\item Let $\widetilde \eta$ be a Poisson point process in $\R^d \times \R$ with intensity measure $\widetilde\Q$ given by 
\[
e^{h} \md h \md v.
\]
\end{enumerate} 

\begin{figure}[t]
\centering
\includegraphics[width=0.3\columnwidth]{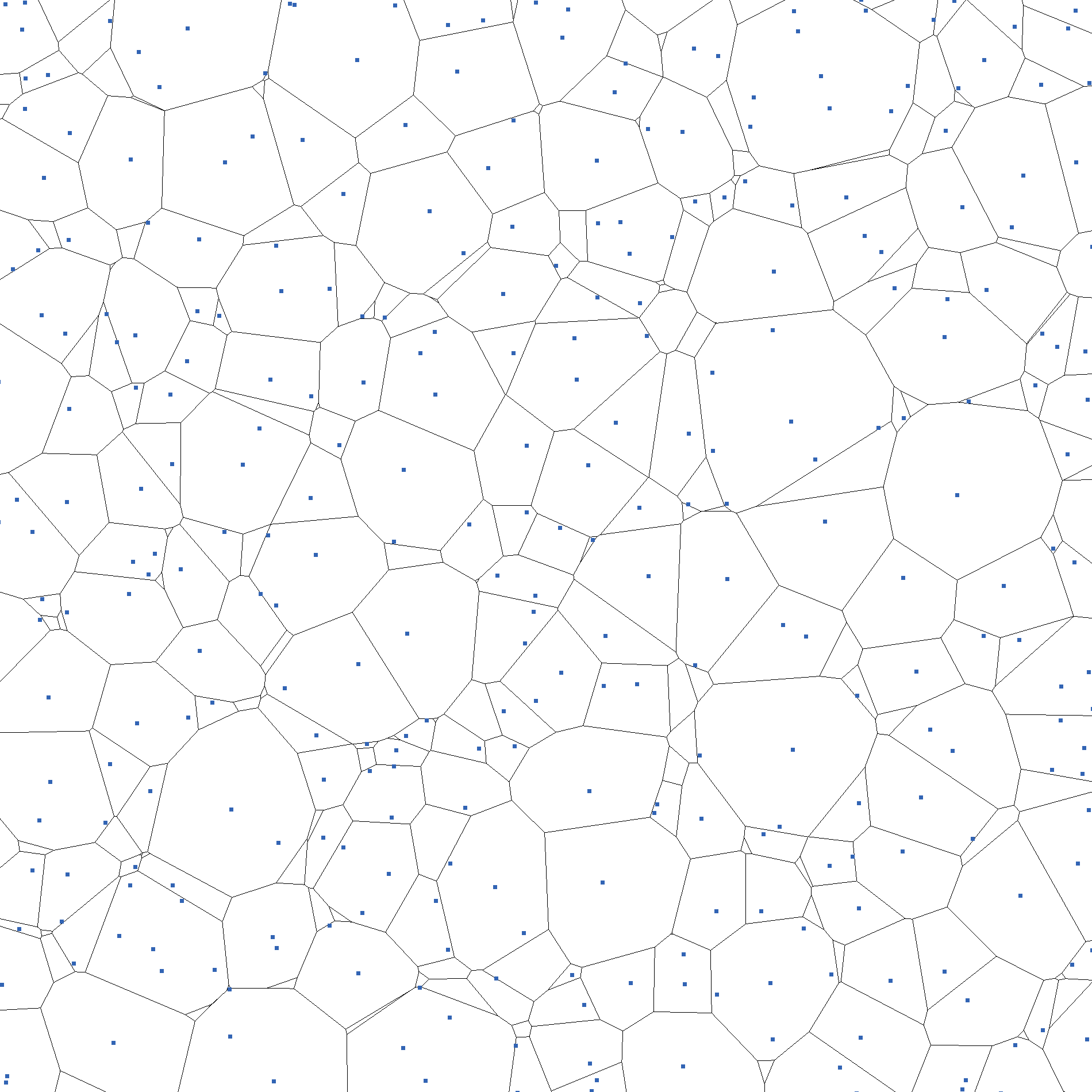}
\quad
\includegraphics[width=0.3\columnwidth]{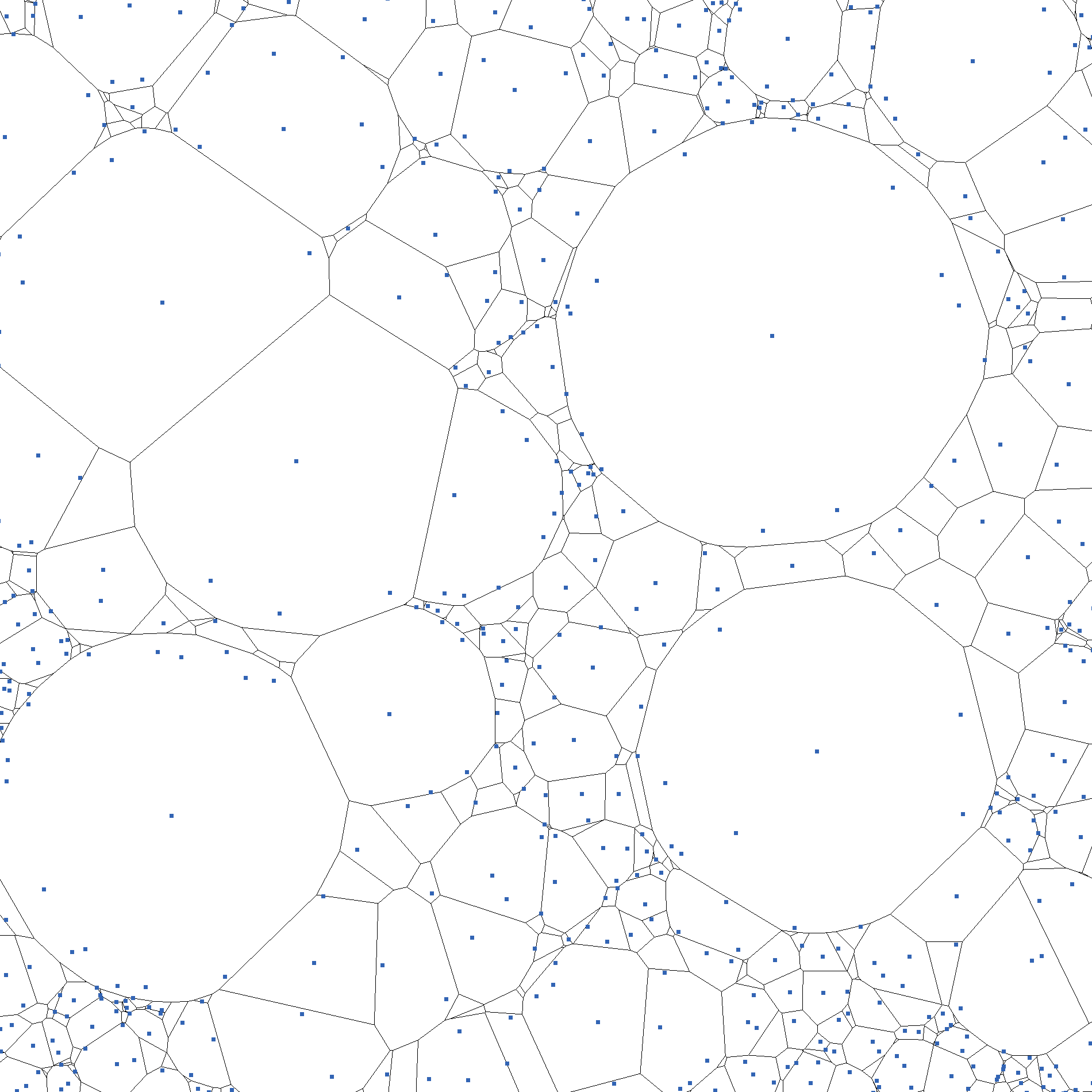}
\quad
\includegraphics[width=0.3\columnwidth]{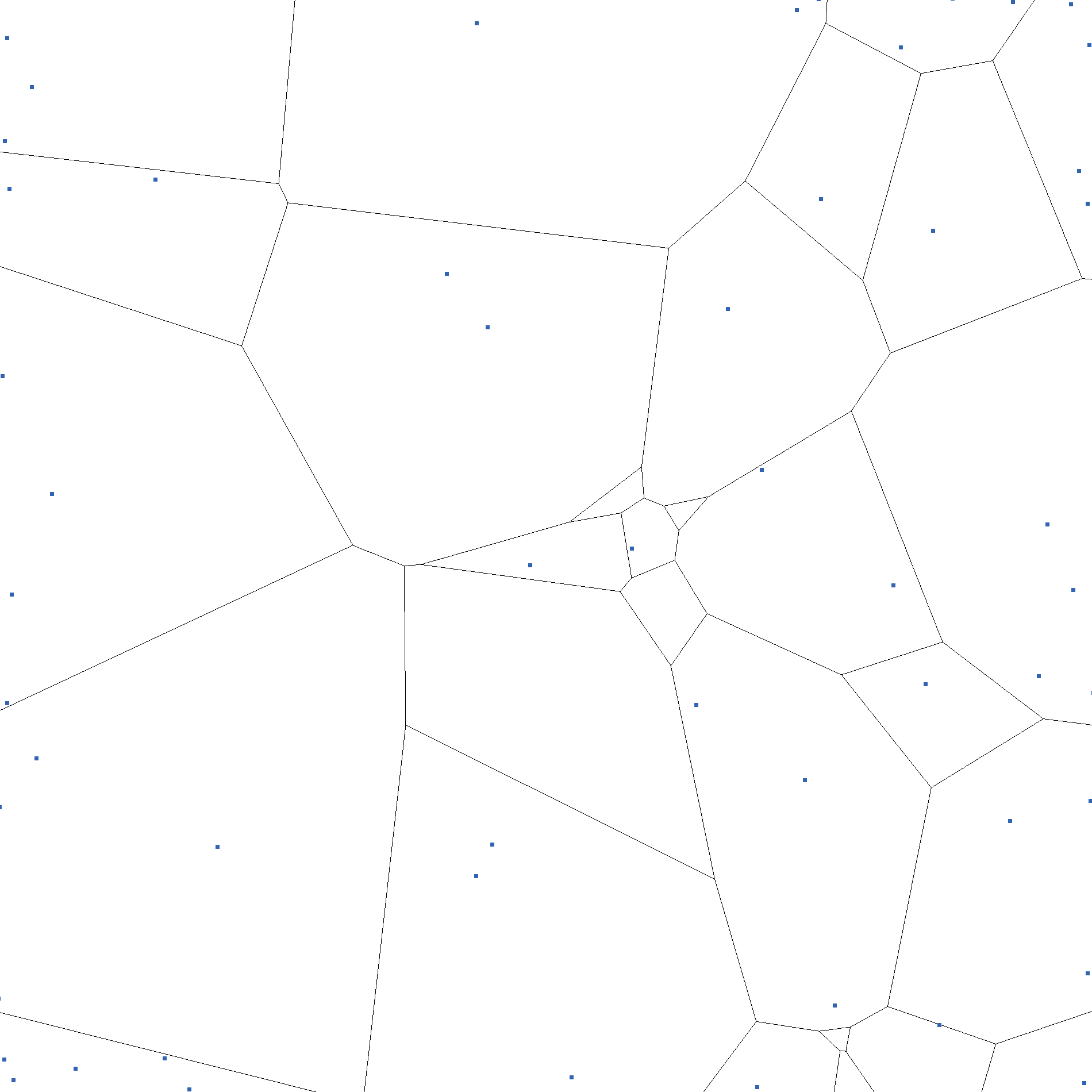}
\caption{Left: $\beta$-Voronoi tessellation in $\R^2$ with $\beta=5$. Middle: $\beta'$-Voronoi tessellation in $\R^2$ with $\beta=2.5$. Right: Gaussian-Voronoi tessellation in $\R^2$. The corresponding centers of growth are marked in blue.} \label{fig:BetaModels}
\end{figure}

The corresponding Laguerre tessellations appear to have certain special features allowing for a more detailed probabilistic analysis (see \cite[Section 5.3]{GiWL25} for more details). We call $\cL(\eta_{\beta})$, $\cL(\eta'_{\beta})$ and $\cL(\widetilde \eta)$ the $\beta$-, the $\beta'$- and the Gaussian-Voronoi tessellations, respectively, and for simplicity in the rest of the paper will often refer to them as $\beta$, $\beta'$ and Gaussian model, respectively. These models have been introduced and studied extensively in a sequence of works \cite{GKT22, GKT22b, GKT23}. The postfix \textit{Voronoi} in the name is motivated by the fact that $\beta$-Voronoi tessellation has the same distribution as the Poisson-Voronoi tessellation intersected with a lower-dimensional subspace \cite{GKT24}. Furthermore, the Gaussian-Voronoi tessellation arises as a scaling limit of both $\beta$- and $\beta'$- Voronoi tessellations when $\beta\to\infty$ (see \cite{GKT22b} and \cite{GKT24} for more details).

\subsection*{Main result.}  Our goal in this paper is to study distributional approximations for the number of extreme points in a random Laguerre tessellation whose spatial coordinates lie within a growing observation window in $\R^d$. More precisely, for $n \in \N$ and $W_n:= [-n,n]^d$, let $F_n(\eta)$ denote the number of extreme points with their spatial coordinate lying in $W_n$, given by
\begin{equation}\label{eq:Fn}
F_n(\eta) : = \sum_{(v,h) \in \eta} \ind (v \in W_n) \ind ((v,h) \text{ forms a cell in $\cL(\eta)$}).
\end{equation}
Note that a cell associated with a point in $W_n$ may extend partially or even entirely outside this window. 

The study of central limit theorems in similar birth-growth models was initiated in \cite{Bar}, where a more general class for the speed of growth was considered. Subsequently, in \cite{SY}, the first quantitative results providing rates for Gaussian convergence in the case of the $\beta$-model with $\beta\ge 0$ was established, albeit with an extraneous logarithmic factor. More recently, for the processes $\eta_{\beta}$ or $\widetilde\eta$, a central limit theorem for $F_n(\eta)$ without any rates of convergence was established in \cite{GKT23}, relying on a general central limit theorem for absolutely regular stationary random fields (or tessellations) \cite[Theorem 4.9]{Heinrich} (see also \cite[Theorem 3.1]{Hein94}), and requiring good bounds for mixing coefficients of corresponding random tessellations. %This approach however does not provide any information on the speed of convergence in the obtained CLT.
However, the corresponding bounds for mixing coefficients in the case of $\beta'$-model (while being presumably optimal) appears to be insufficient for applying the results in \cite{Heinrich}. The main aim of the current paper is to fill these gaps, and provide a quantitative result for all the three models with presumably optimal rates of convergence.

To state our main result, we recall two standard probability metrics used to measure the distance between distributions: the Wasserstein and the Kolmogorov distances. The Wasserstein distance between (the distributions of) two real-valued random
variables $X$ and $Y$ is given by
$$
d_W(X,Y):= \sup_{h \in \operatorname{Lip}_1} |\E\; h(X) - \E \; h(Y)|,
$$
where $\operatorname{Lip}_1$ denotes the class of all Lipschitz
functions $h: \R \to \R$ with Lipschitz constant at most one. The Kolmogorov distance, obtained by restricting the test functions to indicators of half-lines, is given by
$$
d_K(X,Y):= \sup_{t \in \R} |\P(X \le t) - \P(Y \le t)|.
$$
In the following main result of our paper, we establish a quantitative central limit theorem for $F_n$ for all the three models, providing a presumably optimal rate of convergence, which is optimal in the Kolmogorov case.

\begin{theorem}\label{thm:1} Let $\eta$ be either (a) $\eta_\beta$ with $\beta>-1$, or (b) $\eta_\beta'$ with $\beta>5d+1$, or (c) $\widetilde \eta$. Then there exists $0 < C_1\le  C_2<\infty$ depending only on $d,\beta,\gamma$ such that
$$
C_1 n^{d} \le \operatorname{Var}(F_n(\eta)) \le C_2 n^d.
$$
Moreover, for $N \sim \cN(0,1)$, there exists a constant $C \in (0,\infty)$ depending only on $d,\beta,\gamma$ such that for all $n \in \N$,
    $$
    d\Big(\frac{F_n(\eta) - \E F_n(\eta) }{\sqrt{\operatorname{Var} F_n(\eta) }}, N\Big) \le \frac{C}{n^{d/2}}
    $$
    for $d \in \{d_W,d_K\}$. Moreover, the bound on the Kolmogorov distance is of optimal order.
\end{theorem}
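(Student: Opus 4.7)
The plan is to represent $F_n(\eta)$ as a sum of score functionals $\xi(x,\eta) := \ind(v \in W_n)\,\ind(x \text{ is extreme in } \eta)$ over $x=(v,h) \in \eta$ and to apply a general quantitative CLT for region-stabilizing Poisson functionals. The first and most critical step is to identify, for each $x=(v,h) \in \eta$, a random spatial radius $\rho(x,\eta)$ such that the event ``$x$ forms a cell in $\cL(\eta)$'' depends on $\eta$ only through $\eta \cap (B(v,\rho(x,\eta)) \times E)$. A natural candidate is the smallest $r>0$ such that, on the one hand, every potential witness $w \in \R^d$ that can make $\p(w,x)$ minimal lies in $B(v,r)$, and, on the other hand, every $x'=(v',h') \in \eta$ whose crystal could reach into $B(v,r)$ with strictly smaller power than $x$'s already satisfies $\|v'-v\| \le r$. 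For the $\beta$-model and the Gaussian model, the Poisson intensity in ``old'' time slabs is large enough that $\rho$ concentrates; for the $\beta'$-model the assumption $\beta>5d+1$ is invoked precisely to guarantee enough mass of seeds with very negative $h$ and simultaneously the integrability required by the moment estimates below.

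The second step is to establish tail estimates of the form $\P(\rho(x,\eta \cup A) > r) \le g(r)$, uniformly over small sets $A$ of additional seeds, where $g$ decays sufficiently fast (polynomially of high order, or exponentially). Combined with standard Mecke-type identities this yields the integrability of $\xi$ and of its first- and second-order add-one cost operators under the relevant Palm measures. The third step is then to feed these inputs into an off-the-shelf quantitative CLT for region-stabilizing Poisson functionals, which delivers, for $d \in \{d_W,d_K\}$,
\[
d\!\left(\frac{F_n(\eta) - \E F_n(\eta)}{\sqrt{\Var F_n(\eta)}},\, N\right) \le \frac{C}{\sqrt{\Var F_n(\eta)}}.
\]

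The fourth step is to obtain the matching variance bounds $C_1 n^d \le \Var F_n(\eta) \le C_2 n^d$. The upper bound is a by-product of the same stabilization-based covariance estimates, since the spatial covariance between two score contributions decays fast enough that the double spatial integral scales as the volume of $W_n$. For the lower bound I would partition $W_n$ into $\Theta(n^d)$ sub-cubes of constant side length, condition on $\eta$ restricted to the complement of a fixed sub-cube, and exhibit on a positive-probability event two resamplings inside the sub-cube that change $F_n$ by an order-one amount; summing these essentially independent fluctuations yields $\Var F_n \gtrsim n^d$. Combining with the preceding display produces the $n^{-d/2}$ rate.

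The main obstacle is Steps 1 and 2 in the $\beta'$-case: since seeds with $h$ close to $-\infty$ have arbitrarily negative power, they can in principle cover the potential cell of $x$ from far away, so controlling $\rho$ requires truncating the time coordinate at a level that balances the Poisson mass of a time slab against the spatial reach of its seeds. The threshold $\beta>5d+1$ is designed to keep the resulting tail integrals finite even after being raised to the fourth power required in the Stein--Malliavin bound for the Kolmogorov distance. Finally, the claimed optimality of the Kolmogorov rate should follow from a matching lower bound produced by a direct cumulant/third-moment estimate for $F_n$, which is positive because the one-site variance established in the preceding step is non-degenerate.
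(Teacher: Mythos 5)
Your overall plan (sum of scores, region-stabilizing Poisson CLT, variance bounds, optimality) parallels the paper, but Step 1 contains a genuine gap that you in fact flag yourself. You propose a \emph{spatial} stabilization radius $\rho(x,\eta)$ so that extremality of $x=(v,h)$ is determined by $\eta\cap\bigl(B^d(v,\rho)\times E\bigr)$. This cannot work for the $\beta'$- and Gaussian models (and is already awkward for $\beta$): a seed $(v',h')$ with $h'$ very negative can be spatially far from $v$ and still influence whether $x$ is extreme, because the set of seeds that could ever cover a point near $v$ is bounded below a \emph{downward paraboloid} in $(v',h')$-space, not a cylinder. No deterministic truncation of the time coordinate recovers a cylinder either: you would still have to argue that seeds outside the truncated region do not matter, and that argument \emph{is} the content of the coverage-time tail bound. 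The paper resolves this precisely by abandoning a radius in favor of a paraboloid-shaped \emph{region} of stabilization $D\bigl((v,h),T\bigr)=\{(v',h'): h'\le T,\ \sqrt{T-h}+\sqrt{T-h'}\ge\|v-v'\|\}$, where $T=T\bigl((v,h),\eta\bigr)$ is the random coverage time of the cell, and the key quantitative input is a tail bound for $T$ (Proposition \ref{lm:tailbounds}), not for a spatial radius. Your ``truncate and balance mass against spatial reach'' heuristic is aimed at the right difficulty, but as written it produces neither a monotone stabilization region nor the tail estimates needed to make the integrals in Theorem~\ref{thm:KolBd} finite; you would need the region-valued formulation.

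Two smaller remarks. First, your variance lower bound via resampling in $\Theta(n^d)$ constant-size sub-cubes is essentially the paper's argument and is fine. Second, your proposed route to optimality through a third-cumulant lower bound is substantially harder than what is needed: since $F_n$ is integer-valued, one gets for free that the Kolmogorov distance of the normalized $F_n$ to $N$ is at least of order $1/\sqrt{\Var F_n}$ (Englund's classical estimate), which together with $\Var F_n\lesssim n^d$ already gives the matching lower bound without any cumulant computations. Finally, for the $\beta'$ case you attribute the threshold $\beta>5d+1$ to ``keeping tail integrals finite after being raised to the fourth power'': in the paper's argument the bottleneck is actually the integral $\mathbb{Q}f_\alpha^{(3)}$ coming from two-point stabilization probabilities $q(x,y)$, which only converges for $\beta>5d+1$ after choosing the moment exponent $p$ optimally, rather than a raw fourth-moment bound.
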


Let us briefly compare our results with the existing relevant literature. First, our quantitative bound in Theorem \ref{thm:1} removes the extraneous logarithmic factor that appears in the bounds in \cite{SY} for the $\beta$-model with $\beta\ge 0$ and extends the result to $\beta\in (-1,0)$. Moreover, our result appears to be the first quantitative CLT for the $\beta'$- and the Gaussian models. We also note here that \cite[Theorem 1.2]{SY} established exact variance asymptotics for the specific case of the $\beta$-model with $\beta\ge 0$, while our variance estimates in Theorem \ref{thm:1} apply to all the three models, though they only provide lower and upper bounds of matching order in $n$. Finally, we mention the work \cite[Corollary 1.1]{CY}, which utilizes the notion of localization to study a related functional in the Gaussian model, particularly its variance asymptotics.

In proving Theorem \ref{thm:1}, there are two main challenges that we tackle. First, the seeds in the model typically have long-range interactions, making proving a qualitative CLT using classical stabilization methods difficult. We instead use a more recent approach based on the notion of {\em region-§stabilization} introduced in \cite{BM22}. Secondly, in all our examples, the measure $F$ on the time space is not integrable (or, as in the case of the $\beta'$-model, not even locally integrable). This makes it very challenging to optimally estimate some of the integrals that typically arise in stabilization related quantitative CLT bounds. An important intermediate step in order to control the so-called \textit{region of stabilization} (see Section \ref{sec:cltBM} for details) is to obtain good tail bounds for the coverage time $T(x,\eta)$, which is the last point in time, when the cell $C(x,\eta)$ sees any growth (after this time, the cell stays unchanged). These tail bounds are established in Proposition \ref{lm:tailbounds}, and are of independent interest. 

 \begin{remark}[Voronoi tessellation] We point out here that when $\eta$ is a homogeneous Poisson point process with intensity $1$, we may still define the Laguerre tessellation based on $\eta$ by taking the activation time of every point $v\in\eta$ to be $h=0$. In this case, the obtained tessellation coincides with the classical Voronoi tessellation and every point $(v,0)$, $v\in\eta$ is an extreme point. Hence, $F_n(\eta)=\sum_{v\in\eta}\ind(v\in W_n)$, which is a Poisson random variable with parameter $(2n)^d$, and hence satisfies a CLT as $n\to\infty$.
 \end{remark}

\begin{remark}[Range of $\beta$ in Theorem \ref{thm:1} for the $\beta'$ model]\label{rem:1}
    The effects of a long-range interaction and non-integrable $F$ are especially amplified in the $\beta'$ model. In this case, the exponential factor appearing in the upper bound to the probability of a seed $x_1$ influencing whether another seed $x_2$ is extreme or not, behaves like a positive constant for points activated near time $-\infty$ (see \eqref{eq:tailbeta'} for more details), which makes the analysis particularly difficult. This in particular doesn't let us prove our result in the $\beta'$-model for $d/2+1<\beta \le 5d+1$. Our lower limit $5d+1$ is very likely suboptimal.

\end{remark}

\begin{remark}[Comparison with Johnson-Mehl tessellations] In \cite{BMT}, a quantitative Gaussian approximation was proved for the so-called exposed seeds in a Johnson-Mehl (JM) birth-growth model, where seeds appear randomly in $\R^d$ at
random times in $[0,\infty)$ and start growing instantaneously in all directions with a random speed, with the triplet of location, birth time, and growth speed modeled according to a Poisson point process. 

We first note the models are different in the follwing two senses: first, in the JM model, the birth-times are in $[0,\infty)$ where for us, we allow it to be in $\R$. Secondly, In the JM model, the cells grow at a constant speed throughout its lifetime, while in our model, we have a parabolic growth, which means that the speed of growth slows down with time. 

However, the main difference in the JM model and our model is that, a seed is exposed in the JM model if it is not covered by another cell at the time of its birth. In contrast, in our model, a seed may be extreme, even if it is covered at the time of its birth, but only emerges outside of the union of other growing cells at a later time after its birth. This is an important difference since it introduces interaction at a much larger scale. Thus, even though the results in \cite{BMT} also uses the notion of stabilization regions, in our work we require a different and more complex region of stabilization involving the random coverage time $T$ (see \eqref{eq:covertime1} for a definition) of our cells.
\end{remark}

\begin{remark}[Extensions to weghted functionals of extreme points]
A natural next step to Theorem \ref{thm:1} would be to ask whether similar quantitative CLTs hold for certain weighted functionals of the extreme points. In particular, one can study functionals of the form
    \begin{equation}\label{eq:Fn'}
        F_n'(\eta, w): = \sum_{(v,h) \in \eta} \ind (v \in W_n) \ind ((v,h) \text{ forms a cell in $\cL(\eta)$}) \, w((v,h),\eta),
    \end{equation}
    where $w$ is weight function depending on the seed $(v,h)$ and the whole point collection $\eta$. In particular consider
    \begin{align*}
        w((v,h),\eta)={1\over d+1-k}\sum_{F\in\cF_{k}(C((v,h),\eta))}\ind(z(F)\in W_n),
    \end{align*}
    where $\cF_k(P)$ is the set of all $k$-dimensional faces of $P$ and $z(P)$ is the center of the circumball of polytope $P$ (center of $P$). In this case if the tessellation $\cL(\eta)$ is normal the functional $F_n'(\eta,w)$ counts the number of $k$-dimensional faces of $\cL(\eta)$ with their centers in $W_n$. One could also choose another center function $z$, for example $z(P)$ being the lexicographically smallest vertex of $P$.
   % $z:\cC'\times \bN\mapsto \R^d$ is a measurable function such that for any $x\in\R^d$ we have $z(C+x,\eta+x)=z(C,\eta)+x$ (here $\eta$ is a process on the space $\cC'$, e.g. a tessellation viewed as a particle process). 
   Another prominent example would be $w((v,h),\eta))={\rm vol}(C((v,h),\eta))$. In this case $F_n'(\eta,w)$ is the total volume of all cells of $\cL(\eta)$ whose nuclei $v$ fall within $W_n$.

   Our approach using region stabilization continues to apply to functionals of the form \eqref{eq:Fn'}. In particular, for both the above choices of weights (or for any weight that is completely determined by the cell of the point $(v,h)$), the so-called region of stabilization remains the same as we consider in this paper. Thus, almost all of our computations go through, provided (1) $w$ satisfies certain conditions required for Theorem \ref{thm:KolBd} below, especially having a finite $(4+p)$-moment uniformly in $(v,h)$, and (2) one can obtain a suitable variance lower bound for $F_n'(\eta, w)$. While this might be possible to achieve for both the examples above, it is a challenging task and will require significantly more work, and hence they are not the focus of the present work.%\ccomm{does this sound fine, or should we add more details?}
\end{remark}

\subsection*{Structure of the paper.} The rest of the paper is organized as follows. In Section 2 we collect the results on quantitative CLT for stabilizing Poisson functionals. In Section 3 we prove a few auxiliary results for Poisson-Laguerre tessellations, including tail estimates for last time when the cell of given seed $x$ sees any growth (coverage time) and the lower bounds for the variance. Section 4 is devoted to the proof of Theorem \ref{thm:1}.

\section{Preliminaries}

\subsection{Notation} Throughout the manuscript, $B^d(x,r)$ denotes the $d$-dimensional closed Euclidean ball with center $x\in\R^d$ and radius $r>0$. Given a set $A\subset \R^d$, $\partial A$ stands for its boundary. By $|A|$ we denote the cardinality of an at most countable set $A$. We let 
\[
\kappa_d={\pi^{d\over 2}\over \Gamma({d\over 2}+1)}
\]
denote the volume of the unit ball in $\R^d$. We write $A\lesssim B$ to mean that there exists a constant $C$ independent of $n$, such that $A\le C B$.% and we write $A\asymp B$ if $A\lesssim B$ and $B\lesssim A$.%Let $\cF$ be the space of all closed subsets of $\R^d$ equipped with Fell topology.\ccomm{do we need to say this here? We use $\cF$ also for the $\sigma$-algebra on $\X$ below}

\subsection{A quantitative CLT for stabilizing Poisson functionals}\label{sec:cltBM}

Let $(\X,\cF)$ be a measurable space with a $\sigma$-finite measure $\Q$ and a measurable semi-metric $\md:\X \times \X \to [0,\infty)$. For $s \ge 1$, let $\eta_s$ be a Poisson process on $\X$ with intensity measure $s\Q$. Denote by $\bN$ the set of all simple and locally finite counting measures on $\X$, equipped with an appropriate $\sigma$-algebra $\mathscr{N}$. In \cite{BM22}, a quantitative CLT as $s\to\infty$ was proved for statistics of the form 
\begin{equation}\label{eq:scoresum}
	H_s(\eta_s)= \sum_{x \in \eta_s} \xi_s(x,\eta_s),
\end{equation}
where $\xi_s: \X \times \bN \to \R$ is a measurable \textit{score function}, that roughly represents the local contribution of the point $x$ in the overall statistic $H_s$. For certain stabilizing functionals, it may happen that the so-called \textit{radius of stabilization} (see e.g.\ \cite{LSY19}) is too large to obtain meaningful Gaussian approximation bounds. In such a context, it is often useful to consider general stabilization regions for the scores.

For our purposes, we fix $s=1$ throughout, and hence drop all such subscripts $s$. Specifically, we let $\xi:=\xi_1$, $\eta=\eta_1$ and $H:=H_1$ in \eqref{eq:scoresum}. Moreover, we make the following additional assumptions. Below, we write $0$ for the empty point configuration, while for two configurations $\cM_1$ and $\cM_2$, we write $\cM_1 \le \cM_2$ when $\cM_1$ is contained in $\cM_2$ as a point configuration. Also, for a subset $A \subset \X$ and $\cM \in \bN$, we write $\cM_{A}$ to denote the restriction of the configuration $\cM$ to the set $A$.

\begin{enumerate}
    \item[(A0)] \textit{Monotonicity:} Assume that if $\xi(x, \mathcal{M}_1)=\xi(x, \mathcal{M}_2)$ for some $\mathcal{M}_1,\mathcal{M}_2 \in \mathbf{N}$
with $0\neq \mathcal{M}_1\leq \mathcal{M}_2$, then
\begin{equation}
	\label{eq:ximon}
	\xi(x, \mathcal{M}_1)=\xi(x, \mathcal{M}') \quad 
	\text{for all} \, \mathcal{M}'\in\mathbf{N} \; \text{ with } \; \mathcal{M}_1\leq \mathcal{M}'\leq \mathcal{M}_2.
\end{equation}
	\item[(A1)] \textit{Stabilization region:} There exists a map $R$ from $\{(x,\mathcal{M})\in\X\times \mathbf{N}:x\in\mathcal{M}\}$
	to $\cF$ such that
	\begin{enumerate}[(1)]
		\item[(A1.1)] 
		\begin{equation}
			\label{eq:1}
			\{\mu \in \mathbf{N} : y\in R(x,\mu+\delta_x)\}\in\mathscr{N} \quad \forall \, (x,y)\in\X^2
		\end{equation}
		 and
		\begin{equation}
			\label{eq:2}
			\P(y\in R(x, \eta+\delta_x))\, \text{ and } \,
			\P\left(\{y_1, y_2\}\subseteq  R(x, \eta+\delta_x)\right)
		\end{equation}
		are Lebesgue measurable functions of $(x,y) \in \X^2$ and
		$(x,y_1,y_2) \in \X^3$, respectively.
		\item[(A1.2)] the map $R$ is monotonically decreasing in the second argument,
		i.e.\
		$$
		R(x,\mathcal{M}_1) \supseteq R(x,\mathcal{M}_2),
		\quad \mathcal{M}_1 \leq \mathcal{M}_2,\; x\in\mathcal{M}_1,
		$$ 
		\item[(A1.3)] for all $\cM\in\mathbf{N}$ and $x\in\cM$, $\cM_{R(x,\cM)} \neq 0$
		implies $\cM_{R(x,\cM +\delta_y)} \neq 0$ for all
		$y \notin R(x,\cM)$,  
		\item[(A1.4)] for all $\mathcal{M}\in\mathbf{N}$ and $x\in\mathcal{M}$,
		\begin{displaymath}
			\xi\big(x,\mathcal{M}\big)
			=\xi\big(x,\mathcal{M}_{R(x,\mathcal{M})}\big).
		\end{displaymath}
	\end{enumerate}
\end{enumerate}
Note that (A1) holds trivially if one takes $R$ to be identically
equal to the whole space $\X$. If (A1) holds with a non-trivial
$R$, then the score function is called
\emph{region-stabilizing}.

\begin{enumerate}
	\item[(A2)] \textit{$L^{4+p}$-norm:} There exists a $p >0$
	such that, for all $x \in \X$ and $\mu\in\mathbf{N}$ with $\mu(\X) \le 7$,
	\begin{displaymath}
		\Big\|\xi\big(x, \eta+\delta_x+\mu\big)\Big\|_{4+p}
		\leq M_{p}<\infty.
	\end{displaymath}
\end{enumerate}
Further, let $r: \X \times \X \to [0,\infty]$ be a measurable function
such that for all $x, y \in \X$,
\begin{equation}
	\label{eq:Rs}
	\P(y \in R(x, \eta +\delta_x))
	\le  e^{-r(x,y)}.
\end{equation}
Note that we allow $r$ to be infinite. For
$x_1,x_2 \in \X$, denote
\begin{equation}
	\label{eq:g2s}
	q(x_1,x_2):=\int_\X \P\Big\{\{x_1,x_2\} 
	\subseteq R\big(z, \eta +\delta_z\big)\Big\} \;\mathbb{Q}(\md z).
\end{equation}
Denote $\zeta:=p/(40+10p)$ with $p >0$ as in (A2). For $y\in\X$, define the functions
\begin{gather}
	\label{eq:g}
	g(y) :=\int_{\X} e^{-\zeta r(x, y)} \;\mathbb{Q}(\md x), \text{ and }\\
	G(y) := \max(M_p^2,M_p^4)+\max(M_p^2g(y)^{2/(4+p/2)}, M_p^4g(y)^{4/(4+p/2)})(1+g(y)^4).
\end{gather}
For
$\alpha>0$, let
\begin{equation}
	\label{eq:fa}
	f_\alpha(y):=f_\alpha^{(1)}(y)+f_\alpha^{(2)}(y)+f_\alpha^{(3)}(y),
	\quad y\in\X,
\end{equation}
where %for $y \in \X$,
\begin{align}
	\label{eq:fal}
	f_\alpha^{(1)}(y)&:=\int_\X G(x) e^{- \alpha r(x,y)}
	\;\mathbb{Q}(\md x), \notag \\
	f_\alpha^{(2)} (y)&:=\int_\X G(x) e^{- \alpha r(y,x)} 
	\;\mathbb{Q}(\md x), \nonumber\\
	f_\alpha^{(3)}(y)&:=\int_{\X} G(x) q(x,y)^\alpha \;\mathbb{Q}(\md x).
\end{align}
Finally, define the function
\begin{equation}
	\label{eq:p}
	\kappa(x):= \P(\xi(x, \eta+\delta_x) \neq 0),\quad
	x\in\X. 
\end{equation}

For an integrable function $f : \X \to \R$, denote
$\mathbb{Q} f:=\int_\X f(x) \mathbb{Q}(\md x)$.

\begin{theorem}[\cite{BM22}, Theorem 2.1]\label{thm:KolBd}
	Assume that $\xi$ satisfy conditions (A0) - (A2) and
	let $H$ be as in \eqref{eq:scoresum}. Then for $p$ as in (A2) and
	$\tau:=p /(32+4 p)$, there exists a constant $C \in (0,\infty)$ depending only on $p$ such that
    $$
    \Var(H) \le C \; \mathbb{Q} ((\kappa+g)^{2\tau}G).
    $$
    Moreover,
	\begin{align*}
		d_{W}\left(\frac{H-\E H}{\sqrt{\operatorname{Var} H}},  N\right) 
		&\leq C \Bigg[\frac{\sqrt{\mathbb{Q} f_\tau^2}}{\operatorname{Var} H}
		+\frac{\mathbb{Q} ((\kappa+g)^{2\tau}G)}{(\operatorname{Var} H)^{3/2}}\Bigg],
	\end{align*}
	and
	\begin{multline*}
		d_{K}\left(\frac{H-\E H}{\sqrt{\operatorname{Var} H}},
		N\right) 
		\leq C \Bigg[\frac{\sqrt{\mathbb{Q} f_\tau^2}
			+ \sqrt{\mathbb{Q} f_{2\tau}}}{\operatorname{Var} H}
		+\frac{\sqrt{\mathbb{Q} ((\kappa+g)^{2\tau}G)}}{\operatorname{Var} H}
		\\
		+\frac{\mathbb{Q} ((\kappa+g)^{2\tau} G)}{(\operatorname{Var} H)^{3/2}} +\frac{(\mathbb{Q} ((\kappa+g)^{2\tau} G))^{5/4}
			+ (\mathbb{Q} ((\kappa+g)^{2\tau} G))^{3/2}}{(\operatorname{Var} H)^{2}}\Bigg],
	\end{multline*}
	where $N$ is a standard normal random variable and
	$C \in (0,\infty)$ is a constant depending only on $p$.
\end{theorem}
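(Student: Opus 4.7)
The plan is to prove Theorem \ref{thm:KolBd} via the Malliavin--Stein method for Poisson functionals. As a starting point, one invokes the general normal approximation bounds of Last--Peccati--Schulte (for Wasserstein) and Schulte / Eichelsbacher--Th\"ale (for Kolmogorov), which for a centered Poisson functional $F$ with $\Var F = 1$ control $d(F,N)$ by a sum of terms built from moments of the first and second add-one cost operators $D_y F = F(\eta+\delta_y)-F(\eta)$ and $D^2_{y,z} F = D_y D_z F$. Concretely, the Wasserstein bound involves $\int \E |D_y F|^3\, \Q(\md y)$ together with a triple-integral expression in $\E[(D^2_{y,z} F)^2 (D^2_{y',z} F)^2]^{1/2}$, and the Kolmogorov bound supplements this with further terms that can be absorbed into the same moment estimates. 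The task therefore reduces to producing clean pointwise-in-$y$ moment bounds on $D_y H$ and $D^2_{y,z} H$ that integrate nicely against $\Q$.

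For $H=\sum_{x\in\eta}\xi(x,\eta)$, one expands
$$
D_y H = \xi(y,\eta+\delta_y) + \sum_{x\in\eta}\bigl(\xi(x,\eta+\delta_y)-\xi(x,\eta)\bigr).
$$
The key observation is that (A0)--(A1) force the $x$-summand to vanish unless $y\in R(x,\cdot)$: by (A1.2)--(A1.3), if $y\notin R(x,\eta+\delta_x)$ then the stabilization region is unchanged by adding $y$, and (A1.4) then preserves the score. An analogous two-step localization yields that $D^2_{y,z}H$ is non-zero only when $y$ and $z$ jointly influence the score of some anchor $x\in\eta\cup\{y,z\}$ — and this joint-interaction probability is exactly the quantity $q(x_1,x_2)$ in \eqref{eq:g2s}.

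The heart of the argument is to turn this localization into moment bounds. After applying Mecke's formula to pull random sums outside expectations, each contribution to $\E|D_y H|^m$ or $\E|D^2_{y,z} H|^m$ (for $m\in\{2,3,4\}$) becomes an integral of a product of a $\xi$-factor and indicators of stabilization events. One then applies H\"older's inequality with exponent $\theta$ to split the $L^{4+p}$-bound on $\xi$ from (A2) (contributing a power of $M_p$) from the probability of the indicator, bounded by $e^{-r(x,y)}$ via \eqref{eq:Rs}, producing a bound proportional to $M_p^{\bullet}\, e^{-\theta r(x,y)}$. The exponent $\theta$ must be chosen uniformly so that (i) the resulting function is integrable in the outer variables of the triple integrals, (ii) the same split accommodates $m=2,3,4$, and (iii) the accumulated power of $M_p$ per integrand stays bounded. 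Optimizing these constraints produces $\zeta = p/(40+10p)$ in the definition of $g$ and $\tau = p/(32+4p)$ in $f_\alpha$, with the $M_p$-factors bundled into the function $G$.

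Plugging the resulting estimates back into the Malliavin--Stein bound, the three types of contribution---diagonal term $\xi(y,\eta+\delta_y)$ (producing the $\kappa$ factor), single-anchor stabilization (producing $f_\alpha^{(1)}$ and $f_\alpha^{(2)}$ from the two possible asymmetric placements of $y$ relative to $x$), and pair-stabilization (producing $f_\alpha^{(3)}$ from the $q$-term)---assemble into the stated Wasserstein and Kolmogorov bounds. The variance upper bound follows along the same lines from the Poisson Poincar\'e inequality $\Var H \le \int \E(D_y H)^2\, \Q(\md y)$ combined with the $m=2$ moment estimate. The main obstacle is step three: a single H\"older split must simultaneously handle the triple integrals (in the variance-of-quadratic-form term), the cubic moment term, and the extra Kolmogorov-specific contributions, all while preserving enough of the $e^{-r}$ decay to ensure integrability in all outer variables; this delicate trade-off is precisely what fixes the explicit exponents $\zeta, \tau$ and dictates the shape of $G$.
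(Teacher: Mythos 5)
Your proposal is correct and follows essentially the same route as the cited reference \cite{BM22}. It is worth noting that the paper itself does not reprove the Wasserstein and Kolmogorov bounds --- it simply cites \cite[Theorem 2.1]{BM22} for those --- and only supplies an argument for the variance upper bound, which proceeds exactly as you describe: Poincar\'e inequality $\Var H \le \int \E(D_xH)^2\,\Q(\md x)$, a H\"older split with exponent $2/(4+p/2)$ giving $\E(D_xH)^2 \le (\E|D_xH|^{4+p/2})^{2/(4+p/2)}\P(D_xH\neq 0)^{2\tau}$, and then the localization estimates $\P(D_xH\neq 0)\le\kappa(x)+g(x)$ and $(\E|D_xH|^{4+p/2})^{2/(4+p/2)}\lesssim G(x)$ imported from \cite{BM22}. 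Your broader sketch of the $d_W$ and $d_K$ bounds --- Malliavin--Stein second-order Poincar\'e inequalities, localization of $D_yH$ and $D^2_{y,z}H$ via the stabilization conditions (A0)--(A1), Mecke/H\"older to convert tail probabilities of stabilization events into $e^{-\theta r}$ factors, and assembly into $\kappa$, $g$, $f_\alpha^{(1)},f_\alpha^{(2)},f_\alpha^{(3)}$ --- matches the structure of the proof in the cited reference.
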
 

\begin{remark}[Applicability in Laguerre tessellations] 
    In the setting of Theorem \ref{thm:KolBd}, fixing $\X=\R^d \times \R$ with an associated measure $\Q(\md v, \md h)$ which we will take as the intensity measure of the processes $\eta_\beta, \eta'_\beta$ or $\widetilde \eta$, we note that $F_n(\eta)$ in Theorem \ref{thm:1} can indeed be expressed as a sum of score functions $\xi_n: \X \times \bN \to \R$ given by
$$
\xi_n((v,h), \cM):= \ind (v \in W_n) \ind ((v,h) \text{ is an extreme point in $\cM$}),
$$
for any locally finite point configuration $\cM \in \bN$ and $(v,h) \in \cM$. In Section \ref{sec:proof1.1}, we show that this is indeed region-stabilizing and it satisfies all the conditions in Theorem \ref{thm:KolBd}. Applying Theorem \ref{thm:KolBd} then yields presumably optimal (in the sense of Berry-Esseen bound) rates of the order of $(\operatorname{Var}(F_n(\eta)))^{-1/2}$ for the Gaussian convergence. We can prove such rates for $\widetilde \eta$, as well as for the full admissible range of $\beta>-1$ in the case of $\eta_\beta$. However, for the $\beta'$ model, while we can estimate all other terms in the bounds of Theorem \ref{thm:KolBd} appropriately assuming only $\beta>3d+1$, when estimating the integral $\Q f_\alpha^{(3)}$ (with $\alpha=\tau$ or $2\tau$), we are forced to assume $\beta>5d+1$ for certain integrals to be finite; in particular, the integral of the function in \eqref{eq:integ5d} is finite only when $\beta>5d+1$. We believe that it might be possible to relax this restriction, though it seems unlikely that we can obtain a result for all $\beta>d/2+1$ using our approach.
\end{remark}

In \cite[Theorem 2.1]{BM22}, the parameter $p$ above is taken to be in the interval $(0,1]$. But the arguments for its proof remain valid for any choice of $p>0$, and for the purposes of the current paper, we indeed take $p>0$. This becomes useful in the proof for the $\beta'$-model where we choose $p>1$ to optimize the range of $\beta$ where we are able to prove our result.

Also, we note here that the variance upper bound above is not stated explicitly in \cite[Theorem 2.1]{BM22}, though it follows from computations therein. For completeness, we provide a short argument below showing this.%\ccomm{appendix moved here}
\begin{proof}[Proof of Variance upper bound in Theorem \ref{thm:KolBd}]
    The variance upper bound follows from combining elements from the proof of Theorem 2.1 in \cite{BM22}. By the Poinc{\'a}re inequality (see e.g.\ \cite{lastpenrose}), we have that
$$
\Var(H) \le \int_\X \E (D_x H)^2\, \Q(\md x).
$$
Arguing as in the proof of \cite[Theorem 5.1]{BM22}, recalling that $\tau=p/(32+4p)$, H\"{o}lder's inequality yields that
\begin{equation}\label{eq:Poi}
\E (D_x H)^2 \le \left(\E |D_x H|^{4+p/2}\right)^{\frac{2}{4+p/2}} \P(D_x H \neq 0)^{\frac{2+p/2}{4+p/2}} \le \left(\E |D_x H|^{4+p/2}\right)^{\frac{2}{4+p/2}} \P(D_x H \neq 0)^{2\tau}.
\end{equation}
As in \cite[Lemma 5.7]{BM22}, one can now bound 
$$
\P(D_x H \neq 0) \le \kappa(x) + g(x),
$$
while by Lemma 5.5 therein (with a slight simplification due to our uniform $L^{4+p}$-norm bound assumption in (A2)),
$$
\left(\E |D_x H|^{4+p/2}\right)^{\frac{2}{4+p/2}} \le C_p G(x)
$$
with $C_p \in [1,\infty)$ depending
only on $p$. Thus, from \eqref{eq:Poi}, it follows that
\begin{align*}
    \Var(H) &\le C_p \Q ((\kappa + g)^{2\tau} G)
\end{align*}
yielding the upper bound.
\end{proof}

\section{Properties of Poisson-Laguerre tessellation}\label{sec:Laguerre}

In this section, we establish some fundamental results about the Laguerre tessellations considered here. In particular, in Proposition \ref{lm:tailbounds}, we provide tail estimates for the growth time of a typical cell in our three models, while Proposition \ref{prop:VarianceBound} provides variance lower bounds for $F_n(\eta)$ of the order of the volume of the window $W_n$ in all our models.

\subsection{Tail bounds for the coverage time}
Consider some locally finite counting measure $\cM \in \bN$, which we identify with its support, defined as in Section \ref{sec:cltBM} with $\X = \R^d \times \R$. Recall that given $x=(v,h) \in \cM$, its cell is defined as
\[
C(x,\cM):=\{w\in\R^d\colon \p(w,x)\leq \p(w,x')\quad\forall x'\in\cM\}.
\]
We may view the cell $C(x,\cM)$ as the result of a crystal growth process. More precisely, we associate with the point  $x=(v,h) \in \cM$ a crystal growth process, where a crystal starts its growth at point $v 
\in \R^d$ and at time $h \in \R$ with the same speed in all directions and the speed of growth slows down with time and is $1/2\sqrt{t}$. Then a point $w\in\R^d$ belongs to the cell of that point $x\in\cM$ whose crystal reaches $w$ first. The result of every crystal growth process is then a collection of crystals/cells $\{C(x,\cM)\}_{x \in \cM}$. With this interpretation, for any $t\ge h$ we may define the cell of $x$ at time $t$ as
\begin{align*}
C_t(x,\cM)&:=\{w\in B^d(v,\sqrt{t-h})\colon \p(w,x)\leq \p(w,x')\; \forall\;  x'\in\cM\}\\
&=C(x,\cM)\cap B^d(v,\sqrt{t-h}),
\end{align*}
which is the result of the crystal growth process at $x$ by time $t$. In case $t<h$ we set $C_t(x,\cM)=\emptyset$. Note that for any $t'>t$ we have $C_t(x,\cM)\subseteq C_{t'}(x,\cM)$, since the cells always grow, or stay the same with time. Further given $x=(v,h)\in \cM$, define the \textit{coverage time} (see also Figure \ref{Fig:RS})
\begin{equation}\label{eq:covertime1}
    T(x,\cM):= \inf_{t\in \R}\{C_t(x,\cM)=C(x,\cM)\}
\end{equation}
of the cell of a point $x \in \cM$, which is the last time when the cell of $x$ sees any growth. Recall the definition of an extreme point from the introduction, and note that if $x$ is not an extreme point, then $C(x,\cM)=\emptyset$ and, hence, $T(x,\cM)=-\infty$. If $x$ is an extreme point we have an alternative representation for $T(x,\cM)$.
% \begin{equation}\label{eq:covertime}
%     T(x, \cM):= \sup_{t \ge h} \big\{\partial B^{d}(v,\sqrt{t-h}) \not \subseteq \cup_{(v',h')\in\cM: h'\leq t}B^d(v',\sqrt{t-h'}) \big\}.
% \end{equation}
\begin{lemma}\label{lm:covertime} For any extreme point $x=(v,h)\in\cM$, we have
\begin{align*}
    T(x, \cM)&= \sup_{t \ge h} \big\{\partial B^{d}(v,\sqrt{t-h}) \not \subseteq \cup_{x'\neq x\in\cM}C_t(x',\cM) \big\}.%\\
    % &=\sup_{t \ge h} \big\{\partial B^{d}(v,\sqrt{t-h}) \not \subseteq \cup_{(v',h')\neq (v,h)\in\cM}B^d(v',\sqrt{t-h'}) \big\}.\label{eq:covertime2}
\end{align*}
\end{lemma}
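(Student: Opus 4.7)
My plan is to prove the identity by establishing both inequalities separately, based on the equivalent representation
$$
T(x,\cM) \;=\; h + \sup_{w\in C(x,\cM)} \|w-v\|^2,
$$
which is immediate from $C_t(x,\cM)=C(x,\cM)\cap B^d(v,\sqrt{t-h})$ together with the definition of $T(x,\cM)$ as the infimum of $t$ for which $C_t(x,\cM)=C(x,\cM)$. Since $x$ is extreme, $C(x,\cM)$ is a closed convex set with non-empty interior; I will focus on the bounded case, where the supremum is attained at some $w^*\in C(x,\cM)$, the unbounded case $T(x,\cM)=\infty$ following by the same argument with $\|w_k-v\|\to\infty$. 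Throughout, denote by $S$ the supremum on the right-hand side of the claim.

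For the direction $S\leq T(x,\cM)$, I would fix $t>T(x,\cM)$ and any $w\in\partial B^d(v,\sqrt{t-h})$. Then $\p(w,x)=t$ exceeds the supremum of $\p(\cdot,x)$ over $C(x,\cM)$, so $w\notin C(x,\cM)$, and therefore there exists $x_0\in\cM\setminus\{x\}$ realising $\min_{x'\in\cM}\p(w,x')<t$. This places $w$ simultaneously in $C(x_0,\cM)$ and in $B^d(v_0,\sqrt{t-h_0})$, hence $w\in C_t(x_0,\cM)\subseteq \bigcup_{x'\neq x}C_t(x',\cM)$, showing the sphere is fully covered for every $t>T(x,\cM)$ and yielding $S\leq T(x,\cM)$.

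For the converse $T(x,\cM)\leq S$, my strategy is to exhibit times $t_k\uparrow T(x,\cM)$ at which the sphere fails to be covered. I would approximate $w^*$ by a sequence $w_k\in\operatorname{int} C(x,\cM)$ with $\|w_k-v\|^2\uparrow T(x,\cM)-h$, which exists because the interior of a closed convex set with non-empty interior is dense in the set (for instance, taking $w_k$ along the open segment from a fixed interior point to $w^*$ and using continuity of $\|\cdot-v\|$). Setting $t_k:=h+\|w_k-v\|^2\in[h,T(x,\cM))$, so that $w_k\in\partial B^d(v,\sqrt{t_k-h})$, the key observation is that $w_k\in\operatorname{int} C(x,\cM)$ forces $\p(w_k,x')>\p(w_k,x)=t_k$ strictly for every $x'\neq x$. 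Consequently, $w_k\notin B^d(v',\sqrt{t_k-h'})$ when $t_k\geq h'$, while $C_{t_k}(x',\cM)=\emptyset$ when $t_k<h'$; either way, $w_k\notin C_{t_k}(x',\cM)$. This certifies $t_k\leq S$ for every $k$, and letting $k\to\infty$ gives $T(x,\cM)\leq S$.

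The main technical point I anticipate is producing the approximating sequence $w_k$ in $\operatorname{int} C(x,\cM)$ with the prescribed behaviour of $\|w_k-v\|^2$; this rests on density of $\operatorname{int} C(x,\cM)$ in $C(x,\cM)$ and continuity of the squared-distance map. A minor subtlety worth mentioning is that for very small $t-h$ (below $\operatorname{dist}(v,C(x,\cM))^2$) the sphere may be disjoint from $C(x,\cM)$ and thus fully covered by other cells, but such $t$ lie below $S$ and are irrelevant once $w_k\to w^*$ forces $t_k$ near $T(x,\cM)$.
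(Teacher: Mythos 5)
Your proof is correct and arrives at the same conclusion through a mild but genuine variant of the paper's argument. The main structural novelty is the explicit reformulation $T(x,\cM) = h + \sup_{w\in C(x,\cM)}\|w-v\|^2$, which the paper never states; it lets you handle the inequality $T(x,\cM)\le S$ by constructing interior points of $C(x,\cM)$ converging to the radial maximiser $w^*$, rather than, as in the paper, arguing by contradiction and connectedness that for $t<T(x,\cM)$ the sphere $\partial B^d(v,\sqrt{t-h})$ must meet $\operatorname{int} C(x,\cM)$ (and separately dispatching the case $C_t(x,\cM)=\emptyset$ by replacing $t$ with a larger $t'$). Both routes rest on the same core fact: any $w\in\operatorname{int} C(x,\cM)$ satisfies a strict power inequality against every other seed, hence lies in no $C_t(x',\cM)$ with $x'\neq x$; your version simply locates such a $w$ constructively rather than topologically. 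For the reverse inequality $S\le T(x,\cM)$, your appeal to a power-minimising seed $x_0$ (so that $w\in C_t(x_0,\cM)$) makes concrete what the paper leaves to the informal phrase ``properties of the crystal growth process''; both versions tacitly use that the power minimum over $\cM$ is attained at each $w$, which is part of the Laguerre tessellation being well-defined for the configurations considered. One small point worth making explicit in your write-up: the inequality $\|w_\lambda-v\|\le\|w^*-v\|$ along the segment from an interior point to $w^*$ follows from the triangle inequality together with the maximality of $w^*$, so $t_\lambda<T(x,\cM)$ automatically, and no monotonicity of $\|w_\lambda-v\|$ in $\lambda$ is actually needed.
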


\begin{proof}
Indeed, if $t\ge T(x,\cM)\ge h$, then 
\[
 C(x,\cM)\cap B^d(v,\sqrt{t-h})=C_t(x,\cM)=C(x,\cM).
\]
According to the properties of the crystal growth process, this implies that for any $w\in \partial B^d(v,\sqrt{t-h})$ there is $x'\neq x\in\cM$ such that $w\in C_t(x',\cM)$. Hence, 
\[
\partial B^{d}(v,\sqrt{t-h})\subset \cup_{x'\in\cM}C_t(x',\cM)
\]
and
\begin{equation}\label{22.01.25_eq2}
T(x,\cM)\ge  \sup_{t \ge h} \big\{\partial B^{d}(v,\sqrt{t-h}) \not \subseteq \cup_{x'\neq x\in\cM}C_t(x',\cM) \big\}.
\end{equation}
At the same time if $t\leq T(x,\cM)$, then 
\begin{equation}\label{22.01.25_eq1}
C_t(x,\cM)= C(x,\cM)\cap B^d(v,\sqrt{t-h})\neq C(x,\cM).
\end{equation}
This implies, that either $ C_t(x,\cM)=\emptyset$ or there is $w\in\partial B^d(v,\sqrt{t-h})$, such that $w\in {\rm int}\, C(x,\cM)$. If $ C_t(x,\cM)=\emptyset$ since $x$ is extreme there is $t'\in (t,T(x,\cM)]$ such that  $ C_{t'}(x,\cM)\neq\emptyset$ and we may assume without loss of generality, that $ C_t(x,\cM)\neq \emptyset$. Now if for any $w\in\partial B^d(v,\sqrt{t-h})$ we have that $w\not\in  {\rm int}\, C(x,\cM)$, then since $C(x,\cM)$ is connected we have $C(x,\cM)\subset B^d(v,\sqrt{t-h})$, which contradicts \eqref{22.01.25_eq1}. Then according to the rules of crystal growth, there is no $x'\neq x\in \cM$ such that $w\in C(x',\cM)$ and, hence, $w\not\in \cup_{x'\neq x\in\cM}C_t(x',\cM)$ implying $\partial B^{d}(v,\sqrt{t-h}) \not \subseteq \cup_{x'\neq x\in\cM}C_t(x',\cM)$. Moreover the same holds for any $t<t'<T(x,\cM)$ and thus,
\[
t< \sup_{t \ge h} \big\{\partial B^{d}(v,\sqrt{t-h}) \not \subseteq \cup_{x'\neq x\in\cM}C_t(x',\cM) \big\},
\]
implying
\[
T(x,\cM)\leq  \sup_{t \ge h} \big\{\partial B^{d}(v,\sqrt{t-h}) \not \subseteq \cup_{x'\neq x\in\cM}C_t(x',\cM) \big\}.
\]
Together with \eqref{22.01.25_eq2} this completes the proof.
\end{proof}

Let $x=(v,h)\in \R^d\times \R$ be some fixed point. In what follows we will be using the following tail bounds for $T(x, \eta \cup \{x\})$, where $\eta \in \{\eta_\beta, \eta'_\beta, \widetilde \eta\}$.

\begin{prop}\label{lm:tailbounds}
    Let $x=(v,h)\in \R^d\times \R$ be some fixed point and $H\in\R$. Then 
\begin{align*}
    \P(T(x,\eta_{\beta}\cup\{x\})> H) &\leq  C_1\exp\big(-\gamma c_1\big(((H\vee h)-1)\vee 0\big)^{d/2+\beta+1}\big),\quad &\forall\, H,h\ge 0,\\
    \P(T(x,\eta'_{\beta}\cup\{x\})> H)
    	&\leq  C_2\exp\big(-\gamma c_2(-(H\vee h))^{-(\beta-d/2-1)}\big),\quad &\forall\, H,h<0,\\
    \P(T(x,\widetilde \eta\cup\{x\})> H)
    	&\leq  C_3\exp\big(-c_3e^{(H\vee h)/2}\big),\quad &\forall\, H\in \R, h\ge 0 \text{ or } H\leq 0, h< 0,\\
        \P(T(x,\widetilde \eta\cup\{x\})> H)
    	&\leq  C_3(1+|h|)^{d/2}\exp\big(-c_3e^{(H\vee h)/2}\big),\quad &\forall\, H>0, h<0,
\end{align*}
where $c_1, c_2, C_1\in (0,\infty)$ are some constants depending on $d$ and $\beta$ only and $c_3, C_2, C_3\in (0,\infty)$ are some constants depending on $d$ only.
\end{prop}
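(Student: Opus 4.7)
The plan is to translate $\{T(x, \eta \cup \{x\}) > H\}$ into the existence of an empty ``downward paraboloid'' in $\R^d \times \R$, compute its Poisson void probability in each of the three models, and eliminate the existential quantifier via a covering argument. First I would combine Lemma \ref{lm:covertime} with the definition of $T$ to observe that $T(x, \eta \cup \{x\}) > H$ with $H \ge h$ forces $C(x, \eta \cup \{x\}) \not\subseteq B^d(v, \sqrt{H - h})$, so that there exists $w \in \R^d$ with $\|w - v\|^2 > H - h$ and $w \in C(x, \eta \cup \{x\})$. By the defining Laguerre inequalities, for such a $w$ the downward paraboloid
\[
P_w := \{(v', h') \in \R^d \times \R : \|v' - w\|^2 + h' \le T_w\}, \qquad T_w := \|w - v\|^2 + h,
\]
contains no points of $\eta$. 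The case $H < h$ reduces to $\{x \text{ is extreme}\}$, which is contained in the same empty-paraboloid event with $T_w \ge h$.

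Next I would compute $\Q(P_w)$ by integrating over the time coordinate. An elementary Beta-function calculation gives $\Q_\beta(P_w) = c(d,\beta)\,\gamma\, T_w^{d/2 + \beta + 1}$ for $T_w \ge 0$; the substitution $s = -h' = |T_w|\,u$ gives $\Q_{\beta'}(P_w) = c'(d,\beta)\,\gamma\,(-T_w)^{-(\beta - d/2 - 1)}$ for $T_w < 0$, the constant being finite because $\beta > d/2 + 1$; and $\widetilde\Q(P_w) = \pi^{d/2} e^{T_w}$ for the Gaussian model. Combined with the Poisson void probability $\P(\eta \cap P_w = \emptyset) = e^{-\Q(P_w)}$ and the lower bound $T_w > H \vee h$, these already produce the stated exponential rates for a single $w$. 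Note that in the $\beta'$-model, $\Q_{\beta'}(P_w) = +\infty$ as soon as $T_w > 0$, because the density $(-h')^{-\beta}$ is non-integrable near $0^-$ when $\beta > d/2 + 1$; this forces $T \le 0$ almost surely and explains the restriction $H, h < 0$ in that bound.

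To eliminate the existential quantifier I would exploit the linearity of $P_w$ in $w$, writing
\[
P_w = \{(v', h') : h' \le h + \|v\|^2 - \|v'\|^2 + 2\, w \cdot (v' - v)\},
\]
so that for $w$ in a ball of radius $\delta$ around a reference point $\tilde w$, $\bigcap_w P_w$ contains a shrunken paraboloid $P_{\tilde w}^-$ differing from $P_{\tilde w}$ only by a controlled shift in the top slab (with no leading-order loss provided one restricts $v'$ to a neighborhood of $\tilde w$; in the Gaussian case this is justified because the factor $e^{h'}$ concentrates the measure near the apex). I would then dyadically decompose the admissible region for $w$ into shells in $T_w$, place an $O(1)$-net in each shell, and sum the void-probability bounds. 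In all three models, the $k$-th shell contribution decays super-exponentially faster than the $k=0$ one, so only the minimal shell $T_w \approx H \vee h$ matters. The resulting polynomial prefactor is absorbed into the exponential whenever $H \vee h$ is large (this is the source of the $((H \vee h) - 1) \vee 0$ truncation in the $\beta$-model bound) and into the constant $C_i$ otherwise; in the Gaussian model with $h < 0 < H$, the minimal shell has radius $\sqrt{H - h} \sim \sqrt{|h|}$ and contains $\sim |h|^{d/2}$ net points, leaving the surviving prefactor $(1 + |h|)^{d/2}$. The main obstacle I expect is making the covering sharp enough to preserve the claimed exponent in the Gaussian model: a naive union bound only gives $\exp(-c_3 e^{(H\vee h)/2})$ rather than the seemingly achievable $\exp(-c\, e^{H\vee h})$, the factor $1/2$ in the inner exponent being the loss incurred when absorbing the polynomial prefactor into the doubly-exponential $\widetilde\Q(P_w)$.
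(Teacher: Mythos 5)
Your proposal follows essentially the same strategy as the paper: use Lemma \ref{lm:covertime} to rephrase $\{T>H\}$ as the existence of an empty downward paraboloid with apex on $\Pi_+(v,h)$ above height $H\vee h$, compute the void probability via the Poisson intensity of such paraboloids, and then remove the existential quantifier by covering the parabolic surface into $O(1)$ large cells per dyadic shell and applying a union bound. The paper implements the last step by covering $\Pi_+(0,h)$ with cylinders $Z(m,u)$ of spatial radius $\sim\sqrt{\tilde t+m}$ and showing that the common intersection of the downward paraboloids rooted in a cylinder contains the explicit shrunken paraboloid $K(R,a,u)$; your ``linearity of $P_w$ in $w$'' observation is an equivalent way to produce the same shrunken set. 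The intensity computations for $P_w$ that you report are exactly those appearing (in shrunken form) in \eqref{eq:24.01.25_beta}--\eqref{eq:24.01.25_Gaussian}, and your remarks on the $(1+|h|)^{d/2}$ prefactor and the non-integrability of $(-h')^{-\beta}$ near $0^-$ in the $\beta'$-model match the paper.

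One small inaccuracy in your exposition: the $((H\vee h)-1)\vee 0$ truncation in the $\beta$-model and the $1/2$ in the Gaussian inner exponent do \emph{not} arise from absorbing a polynomial prefactor --- in the paper's covering, there are only $O(1)$ cylinders per shell, so the union bound prefactor is already $O(1)$. Rather, these losses come from the \emph{shrinkage} of the common intersection paraboloid: choosing cylinder radius $R\sim\sqrt{\tilde t+m}$ (necessary to keep the net size $O(1)$) means the shrunken set has apex time $a-4R^2\approx(\tilde t+m-1)/2$ rather than $\tilde t+m$, and it is this halving/shift that produces both the $1/2$ in the Gaussian exponent and the $-1$ in the $\beta$-model. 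Your alternative route --- $O(1)$-radius cylinders with $\sim(\tilde t+m)^{(d-1)/2}$ of them per shell, absorbing the polynomial into the exponential --- would also work and would in fact slightly improve the exponent constant in the $\beta$-model; the two mechanisms are interchangeable here. With that caveat, the proposal is correct and captures all the essential ideas.
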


\begin{remark}
    Keeping track of the constants along the lines of the proof of Proposition \ref{lm:tailbounds} one may ensure, that $c_1,C_1\in (0,\infty)$ also for $\beta=-1$. At the same time $c_2$ can be chosen to be
    \[
    c_2={\Gamma(\beta-{d\over 2}-1)\over 4^{2d+3-2\beta}\Gamma(\beta-{d+1\over 2})}
    \]
    which belongs to $(0,\infty)$ for any $\beta>d/2+1$, but goes to infinity as $\beta$ approaches it's critical value $d/2+1$.
\end{remark}

\begin{proof}
    Along this proof we will use notation $c, C\in (0,\infty)$ for some constants, which may only depend on $d$ and possibly $\beta$ and whose value may change from line to line.
    
    Let $\eta$ be one of the processes $\eta_{\beta}$, $\eta_{\beta}'$ or $\widetilde \eta$ and consider $T=T(x,\eta\cup\{x\})$. Further, since all three processes $\eta_{\beta}$, $\eta'_{\beta}$ and $\widetilde \eta$ are stationary with respect to their spacial coordinate, we may without loss of generality assume that $v=0$. Note that if $h> H$, then 
    \begin{align*}
    \{T\leq H\} &= \{T=-\infty\}\\
    &=\{C(x,\eta\cup\{x\})=\emptyset\}\\
    &=\big\{\partial B^d(0,\sqrt{t-h})\subseteq \bigcup_{x'\in \eta}C_t(x',\eta\cup\{x\})\text{ for all }t\ge h \big\}.
    \end{align*}
    On the other hand, when $h\leq H$, by Lemma \ref{lm:covertime} we have
    \[
    \{T\leq H\}=\big\{\partial B^d(0,\sqrt{t-h})\subseteq \bigcup_{x'\in\eta}C_t(x',\eta\cup\{x\})\text{ for all }t\ge H\big\}.
    \]
    Combining, we conclude that the event $\{T\leq H\}$ is equivalent to the event
     \[
    \cE:=\big\{\partial B^d(0,\sqrt{t-h})\subseteq \bigcup_{x'\in\eta}C_t(x',\eta\cup\{x\})\text{ for all }t\ge H\vee h\big\}
    \]
    whence
    \begin{align*}
    \P(T> H)&=\P(\cE^c)\\
    &=\P\Big(\exists t\ge H\vee h\colon \partial B^d(0,\sqrt{t-h})\not\subseteq \bigcup_{x'\in\eta}C_t(x',\eta\cup\{x\}) \Big)\\
    &= \P\Big(\exists t\ge H\vee h, w\in \partial B^d(0,\sqrt{t-h}) \colon w\not\in \bigcup_{x'\in\eta}C_t(x',\eta\cup\{x\}) \Big).
    \end{align*}
    Further, note that if $w\in \partial B^d(0,\sqrt{t-h})$ but $w\not\in \bigcup_{x'\in\eta}C_t(x',\eta\cup\{x\})$, this implies that $w\not \in \bigcup_{x'=(v',h')\in\eta, h'\leq t}B^d(v',\sqrt{t-h'})$. Indeed, assume there are $(v_1,h_1)$, $\ldots$, $(v_k,h_k)\in\eta$ such that $h_i\leq t$, $w\in B^d(v_i,\sqrt{t-h_i})$ for all $1\leq i\leq k$, $w\in \partial B^d(0,\sqrt{t-h})$ and $w\not\in  B^d(v',\sqrt{t-h'})$ for any other point $(v',h')\in\eta$. This implies 
    \begin{align*}
    \p(w,(v_i,h_i))&\leq t,\qquad 1\leq i\leq k,\\
    \p(w,(0,h))&=t,\\
    \p(w,(v',h'))&>t,\qquad (v',h')\in \eta\setminus \{(v_i,h_i),1\leq i\leq k\}.
    \end{align*}
    This means that $w\in C_t((v_i,h_i),\eta)$%\ccomm{should be $C_t$?} 
    for some $1\leq i\leq k$, which is a contradiction.

    Hence, denoting by 
    \[
    \Pi_+(v',h'):=\{(w,s)\in\R^d\times \R\colon \|w-v'\|^2+h'= s\}
    \]
    the upward  paraboloid with apex $(v',h')$ and by 
    \[
    \Pi^{\downarrow}_-(v',h'):=\{(w,s)\in\R^d\times \R\colon -\|w-v'\|^2+h'\ge s\}
    \]
    the set of points lying below the downward paraboloid with apex $(v',h')$, we obtain
    \begin{align}
        \P(T> H)&\leq \P\Big(\exists t\ge H\vee h, w\in \partial B^d(0,\sqrt{t-h}) \colon w\not\in \bigcup_{(v',h')\in\eta,h'\leq t}B^d(v',\sqrt{t-h'}) \Big)\notag\\
        &=\P\big(\exists (w,t)\in \Pi_+(0,h)\colon t\ge H,\, t-\|w-v'\|^2<h'\quad \forall (v',h')\in \eta\big)\notag\\
        &=\P\big(\exists (w,t)\in \Pi_+(0,h)\colon t\ge H, \Pi^{\downarrow}_-(w,t)\cap \eta=\emptyset\big).\label{eq:24.01.25_1}
    \end{align}

    In order to estimate the above probability, we will discretise the space and cover the paraboloid surface $\Pi_+(0,h)$ with small cylinders $Z_i$. Then we will control the probability that there is $ (w,t)\in Z_i$ such that $\Pi^{\downarrow}_-(w,t)\cap \eta=\emptyset$ and use the union bound.

    To that end, consider a cylinder $Z:=B^d(u, R)\times [a,b]$, $u\in\R^d$, $R>0$, $a<b$. Then 
    \begin{align}
        P(Z,\eta)&:=\P\big(\exists (w,t)\in Z\colon \Pi^{\downarrow}_-(w,t)\cap \eta=\emptyset\big)\notag\\
        &\leq \P\big( K(R,a,u)\cap \eta=\emptyset\big)\notag\\
        &=\exp\Big(-\E \,|K(R,a,u)\cap \eta|\Big),\label{eq:24.01.25_2}
    \end{align}
    where 
    \[
    K(R,a,u):=\{(v',h')\in\R^d\times\R\colon h'\leq a-R^2,\|v'-u\|\leq \sqrt{a-h'}-R\}.
    \]
    Indeed, it is easy to verify that for any $(w,t)\in Z$, we have that $K(R,a,u)\subset \Pi^{\downarrow}_-(w,t)$, so that $\Pi^{\downarrow}_-(w,t)\cap \eta=\emptyset$ implies $K(R,a,u)\cap \eta=\emptyset$. 
    
    In the next step, we estimate this expectation for our three models. First, let $\eta=\eta_{\beta}$ and fix $a\ge 4 R^2$. Then we have
    \begin{align*}
        \E\,|K(R,a,u)\cap \eta_{\beta}| &=\gamma c_{d,\beta}\int_{0}^{a-R^2}\int_{\R^d}\ind\{\|v-u\|\leq \sqrt{a-h}-R\}\md v h^{\beta}\md h\\
        &=\gamma \kappa_d c_{d,\beta}\int_{0}^{a-R^2}(\sqrt{a-h}-R)^dh^{\beta}\md h\\
        &\ge \gamma \kappa_d c_{d,\beta}\int_{0}^{a-4R^2}(\sqrt{a-h}-R)^dh^{\beta}\md h.
    \end{align*}
    Now for $h\leq a-4R^2$, note that $\sqrt{a-h}\ge 2R$ so that $\sqrt{a-h}-R\ge {1\over 2} \sqrt{a-h}\ge {1\over 2} \sqrt{a-4 R^2-h}$. Together with the above inequality, this implies that for any $\beta\ge -1$ we have
    \begin{align*}
    \E \,|K(R,a,u)\cap \eta_{\beta}| &\ge  \gamma 2^{-d}\kappa_d c_{d,\beta}\int_{0}^{a-4 R^2}(a-4 R^2-h)^{d/2}h^{\beta}\md h\\
    &\ge \gamma {\Gamma({d\over 2}+\beta+{3\over 2})\over 2^{d+2}\Gamma({d\over 2}+\beta+2)} (a-4 R^2)^{d/2+\beta+1},\qquad a\ge 4 R^2.
    \end{align*}
    Combining this with \eqref{eq:24.01.25_2} in the case $a\ge 4 R^2$, and using the trivial bound otherwise, we obtain
    \begin{equation}\label{eq:24.01.25_beta}
        P(Z,\eta_{\beta})\leq\begin{cases}
        \exp\big(-\gamma c\,(a-4 R^2)^{d/2+\beta+1}\big),\,& a\ge 4 R^2,\\
        1,\, &a< 4 R^2.
        \end{cases}
    \end{equation}

    Mext let $\eta=\eta'_{\beta}$. For $a\ge 4 R^2$ we have $ \E(K(R,a,u)\cap \eta'_{\beta})=\infty$ and  hence, $P(Z,\eta_{\beta}')=0$. Indeed, if in this case we have
    \begin{align*}
        \E\,|K(R,a,u)\cap \eta'_{\beta}|&=\gamma \kappa_d c'_{d,\beta}\int_{-\infty}^{0}(\sqrt{a-h}-R)^d (-h)^{-\beta}\md h\\
        &\ge \gamma \kappa_d c'_{d,\beta}R^d\int_{-\infty}^{0}(-h)^{-\beta}\md h=\infty.
    \end{align*}
    If $a<4R^2$, then similar to the previous case, we obtain
    \begin{align*}
        \E\,|K(R,a,u)\cap \eta'_{\beta}|&\ge \gamma 2^{-d}\kappa_d c'_{d,\beta}\int_{-\infty}^{a-4R^2}(a-4R^2-h)^{d/2}(-h)^{-\beta}\md h\\
    &= \gamma {\Gamma(\beta-{d\over 2}-1)\over 2^{d+2}\Gamma(\beta-{d+1\over 2})} (4 R^2-a)^{d/2+1-\beta},\qquad a<R^2.
    \end{align*}
    Combining this with \eqref{eq:24.01.25_2} we get 
    \begin{equation}\label{eq:24.01.25_beta_prime}
        P(Z,\eta'_{\beta})\leq\begin{cases}
        \exp\Big(- \gamma c\, (4 R^2-a)^{-(\beta-d/2-1)}\Big),\,& a<4 R^2,\\
        0,\, &a\ge 4 R^2.
        \end{cases}
    \end{equation}

    Finally, for $\eta=\widetilde \eta$ we obtain
     \begin{align*}
    \E\,|K(R,a,u)\cap \widetilde\eta|&\ge  2^{-d}\kappa_d\int_{-\infty}^{a-4 R^2}(a-4 R^2-h)^{d/2}e^{h}\md h= \big(\pi/2\big)^de^{a-4 R^2},
    \end{align*}
    and together with \eqref{eq:24.01.25_2} this implies 
    \begin{equation}\label{eq:24.01.25_Gaussian}
        P(Z,\widetilde\eta)\leq\exp\big(-  c\,e^{a-4 R^2}\big).
    \end{equation}

    As a next step we introduce the covering of $\Pi_+(0,h)$. For simplicity, we write $\tilde t:=H\vee h$. For $y>0$ a parameter to be chosen later and $m\in\N_0$, consider $u\in \ell(m)\mathbb{Z}^d$, where 
    \[
    \ell(m)=\sqrt{{\widetilde t+y(m+1)-(h\wedge 0)\over 2d}}\in \R_+,
    \]
    which is well defined since $\widetilde t=H\vee h\ge h\ge (h\wedge 0)$. Then we define the cylinders
    \[
    Z(m, u):=B^d\Big(u, {\sqrt{d}\ell(m)\over 2}\Big)\times [\tilde t+ym, \tilde t+y(m+1)].
    \]
    By this definition, we clearly have that
    \[
    \{(w,t)\in\Pi_+(0,h)\colon t\ge H\}\subset \bigcup_{m,u}Z(m,u).
    \]
    Hence by \eqref{eq:24.01.25_1} and the union bound, we have
    \begin{equation}\label{eq:24.01.25_3}
        \P(T>H)\leq \sum_{m=0}^{\infty}\sum_{u\in \ell(m)\mathbb{Z}^d}P(Z(m,u),\eta)\ind\{Z(m,u)\cap \Pi_+(0,h)\neq \emptyset\}.
    \end{equation}
    Note that the probability $P(Z(m,u),\eta)$ is independent of $u$ and can be bounded as in \eqref{eq:24.01.25_beta}, \eqref{eq:24.01.25_beta_prime} and \eqref{eq:24.01.25_Gaussian}. It remains to estimate the number $N(m)$ of cylinders $Z(m,u)$ hitting $\Pi_+(0,h)$ for a given $m\in\N_0$. This can be done as follows. Denote by $A(r_1, r_2):=\{v\in\R^d\colon r_1\leq \|v\|\leq r_2\}$ the annulus with inner radius $r_1$ and outer radius $r_2>r_1$. Then for $m\ge 0$ we have
    \begin{align}
    N(m)&= \#\big\{u\in \ell(m)\mathbb{Z}^d\colon Z(m,u)\cap \Pi_+(0,h)\neq \emptyset\big\}\notag\\
    &= \#\left\{u\in \ell(m)\mathbb{Z}^d\colon B^d\left(u, {\sqrt{d}\ell(m)\over 2}\right)\cap A\left(\sqrt{\widetilde t-h+ym}, \sqrt{\widetilde t-h+y(m+1)}\right)\neq \emptyset\right\}\notag\\
    &\leq \#\left\{u\in \ell(m)\mathbb{Z}^d\colon u\in B^d\left(0,\sqrt{\widetilde t-h+y(m+1)}+{\sqrt{d}\ell(m)\over 2}\right)\right\}\notag\\
    &\leq \ell(m)^{-d}{\rm vol}\, \left(B^d\left(0,\sqrt{\widetilde t-h+y(m+1)}+\sqrt{d}\ell(m)\right)\right)\notag\\
    &= \kappa_d\left(\sqrt{d}+\sqrt{2d(\widetilde t+y(m+1)-h)\over \widetilde t+y(m+1)-(h\wedge 0)}\right)^{d}\leq \kappa_d(3\sqrt{d})^d.\label{eq:24.01.25_4}
    \end{align}

    For $\eta=\eta_{\beta}$ we set $y=1$ and consider only $h\ge 0$, $H\ge 0$. Now we combine \eqref{eq:24.01.25_3} with \eqref{eq:24.01.25_4} and apply \eqref{eq:24.01.25_beta} with
    \[
    a=\widetilde t+m,\qquad R={\sqrt{\widetilde t+m+1}\over 2\sqrt{2}}.
    \]
    We note that for $m\ge 1-(h\vee H)$ we get $a\ge 4R^2$. %Denote by $c_1(d,\beta)={\Gamma({d\over 2}+\beta+{3\over 2})\over 2^{3d/2+\beta+3}\Gamma({d\over 2}+\beta+2)}$. 
    Thus, we obtain that
    \begin{align*}
        \P(T>H)&\leq C\Big[\ind((h\vee H)<1)+\ind((h\vee H)\ge 1)\exp\Big(-\gamma c\big((h\vee H)-1\big)^{d/2+\beta+1}\Big)\\
        &\qquad\qquad+\sum_{m=1}^{\infty}\exp\Big(-\gamma c\,\big((h\vee H)+m-1\big)^{d/2+\beta+1}\Big)\Big]\\
        &\leq C\Big[\exp\Big(-\gamma c\,\big(((h\vee H)-1)\vee 0\big)^{d/2+\beta+1}\Big)+\sum_{m=0}^{\infty}\exp\Big(-\gamma c\,\big((h\vee H)+m\big)^{d/2+\beta+1}\Big)\Big]\\
        &\leq C\Big[\exp\Big(-\gamma c\,\big(((h\vee H)-1)\vee 0\big)^{d/2+\beta+1}\Big)+\exp\Big(-\gamma c\,(h\vee H)^{d/2+\beta+1}\Big)\sum_{m=0}^{\infty}e^{-\gamma c\,m}\Big]\\
        &\leq C\Big[\exp\Big(-\gamma c\,\big(((h\vee H)-1)\vee 0\big)^{d/2+\beta+1}\Big)+\big(1-e^{-\gamma c}\big)^{-1}\exp\Big(-\gamma c\,(h\vee H)^{d/2+\beta+1}\Big)\Big],
    \end{align*}
    where in the second step we used that $(a+b)^{\alpha}\ge a^{\alpha}+b^{\alpha}$ for any $a,b\ge 0$ and $\alpha\ge 1$ together with the fact that $d/2+\beta+1\ge 1$ for any $\beta>-1$ and $d\ge 2$. This finishes the proof of the first bound.

    In the case $\eta=\eta'_{\beta}$, we set $y=-(\widetilde t+h)/2>0$. Further, we combine \eqref{eq:24.01.25_3} with \eqref{eq:24.01.25_4} and \eqref{eq:24.01.25_beta_prime} choosing
    \[
    a=\widetilde t-{m(\widetilde t+h)\over 2},\qquad R={\sqrt{2\widetilde t-(m+1)(\widetilde t+h)-2(h\wedge 0)}\over 4}.
    \]
    Note that the condition $a<4R^2$ implies that $m\leq 2$. Thus we obtain
    \begin{align*}
        \P(T>H)&\leq C\sum_{m=0}^{2}\exp\Big(-\gamma c\,\big(-((H\vee h)+h)(m+1)\big)^{-(\beta-d/2-1)}\Big)\\
        &\leq C\exp\big(-\gamma c\,(-(H\vee h))^{-(\beta-d/2-1)}\big)
    \end{align*}
    yielding the second assertion.
    
    Finally we consider case when $\eta=\widetilde\eta$. We start by deriving a tail bound for any $H, h\in \R$. In the case when $H>0$ and $h<0$, this general bound however appears to be not good enough for our purposes, and hence we treat this case separately at the end. 
    
    Set $y=1$. Combining \eqref{eq:24.01.25_3} with \eqref{eq:24.01.25_4} and \eqref{eq:24.01.25_Gaussian}, and choosing
    \[
    a=\widetilde t+m,\qquad R={\sqrt{\widetilde t+m+1-(h\wedge 0)}\over 2\sqrt{2}},
    \]
     we obtain
    \begin{align*}
         \P(T>H)&\leq C\sum_{m=0}^{\infty}\exp\big(-c\,e^{{\widetilde t\over 2}+{m-1\over 2}+{(h\wedge 0)\over 2}}\big)\\
        &\leq C\exp\big(-c\,e^{((H\vee h)+(h\wedge 0))/2}\big)\sum_{m=0}^{\infty}e^{-cm/2}\\
        &\leq C\exp\big(-c\,e^{((H\vee h)+(h\wedge 0))/2}\big),
    \end{align*}
    where in the second inequality, we have used $e^{y}\ge y+1$. Further, we note that if $H\leq 0$ and $h\leq 0$ then
    \[
    \exp\big(-c\,e^{((H\vee h)+(h\wedge 0))/2}\big)\leq 1,
    \]
    while 
    \[
    \exp\big(-c\,e^{(H\vee h)/2}\big)\ge e^{-c}.
    \]
    Hence, in this case it holds that 
    \[
    \P(T>H)\leq Ce^c\exp\big(-c\,e^{(H\vee h)/2}\big).
    \]
    If $h\ge 0$, then one trivially has
    \[
    \P(T>H)\leq C\exp\big(-c\,e^{(H\vee h)/2}\big).
    \]

    Now assume $H>0$ and $h<0$. Then we start with a different mesh size and let
    \[
    \ell(m)=\sqrt{{\widetilde t+m+1\over 2d}}\in \R_+,
    \]
    which is well defined since $\widetilde t=H\vee h\ge H>0$. We consider the cylinders 
    \[
    Z(m, u):=B^d\Big(u, {\sqrt{d}\ell(m)\over 2}\Big)\times [\tilde t+m, \tilde t+m+1].
    \]
    Arguing similarly as for the bound \eqref{eq:24.01.25_4} and noting that $\widetilde t+m +1 \ge 1$, we obtain
        \begin{align}
    N(m)&\leq \kappa_d\Big(\sqrt{d}+\sqrt{2d -{2dh\over \widetilde t+m+1}}\Big)^{d}\leq \kappa_d(4\sqrt{d})^d\big(1+|h|\big)^{d/2}.\label{eq:24.01.25_4_new}
    \end{align}
    Further, using the bound \eqref{eq:24.01.25_3} and  \eqref{eq:24.01.25_Gaussian} with 
    \[
    a=\widetilde t+m,\qquad R={\sqrt{\widetilde t+m+1}\over 2\sqrt{2}},
    \]
    we arrive at
    \begin{align*}
        \P(T>H)&\leq  C\big(1+|h|\big)^{d/2}\sum_{m=0}^{\infty}\exp\big(-c\,e^{{\widetilde t\over 2}+{m-1\over 2}}\big)\leq C\big(1+|h|\big)^{d/2}\exp\big(-c\,e^{((H\vee h)/2}\big),
    \end{align*}
    which concludes the proof.
\end{proof}

\subsection{Variance lower bounds}
As a final ingredient for proving our quantitative CLT in Theorem \ref{thm:1}, we require appropriate variance lower bounds for our statistic $F_n(\eta)$ which we provide in the next proposition.
\begin{prop}\label{prop:VarianceBound}
We have
\begin{align*}
    \Var(F_n(\eta_{\beta})) &\ge C_1\,n^d,\qquad \beta>-1,\\
    \Var(F_n(\eta'_{\beta})) &\ge C_2\,n^d,\qquad \beta>d/2+1,\\
    \Var(F_n(\widetilde \eta)) &\ge C_3\,n^d
\end{align*}
for constants $C_1, C_2, C_3\in (0,\infty)$ depending only on $d$ and possibly on $\beta$ and $\gamma$ (in $\beta$ and $\beta'$ models).
\end{prop}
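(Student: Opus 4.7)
\textbf{Proof proposal for Proposition \ref{prop:VarianceBound}.}

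The plan is a standard partition-and-conditioning argument: split $W_n$ into $\Theta(n^d)$ disjoint constant-size sub-cubes and show that, conditionally on the configuration outside suitably chosen slabs $Q_i\times I$, each contributes a uniform positive amount to the variance. The coverage-time tail bounds of Proposition~\ref{lm:tailbounds} will make this localization possible despite the non-integrable time-marginal.

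First, I fix a side length $L = L(d,\beta,\gamma) > 0$ large enough (to be chosen) and partition $W_n$ into $N = \Theta(n^d)$ disjoint cubes $Q_1,\dots,Q_N$ of side $L$. For each model, I select a bounded time interval $I\subset E$ of positive constant $\Q$-mass: $I = [0,1]$ in the $\beta$- and Gaussian cases, and $I = [-C,-1]$ for a suitable constant $C = C(\beta,d)$ in the $\beta'$-case. Setting $R_i := Q_i\times I$, I consider two $\sigma(\eta|_{R_i})$-measurable events
\[
\mathcal{A}_i := \{\eta(R_i) = 0\}, \qquad \mathcal{B}_i := \{\eta(R_i) = 1 \text{ and the unique point lies in a distinguished sub-slab } Q_i^{\ast}\times I^{\ast}\}.
\]
Both events have Poisson probabilities bounded below by a uniform $p_0 = p_0(d,\beta,\gamma) > 0$.

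The heart of the argument is to construct an \emph{outer good event} $\mathcal{G}_i$, depending only on $\eta|_{(\R^d\times E)\setminus R_i}$, with $\P(\mathcal{G}_i) \geq q_0 > 0$ uniform in $i$, on which the number $\xi_i$ of extreme points of $\eta$ with spatial coordinate in $Q_i$ differs by exactly one between the realizations in $\mathcal{A}_i$ and $\mathcal{B}_i$. Concretely, $\mathcal{G}_i$ should enforce (a) emptiness of the outer process in a finite downward paraboloid at the distinguished point of $\mathcal{B}_i$, so that this point is automatically extreme upon insertion, and (b) that no outside seed has its extremeness status flipped when a single point in $R_i$ is added or removed. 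Condition (b) is the delicate part: using Proposition~\ref{lm:tailbounds}, the coverage-time tails control how far in space and time the crystal of any external seed can reach, so that for $L$ sufficiently large, the crystals grown from outside $R_i$ do not interact with the (possible) seed inside $R_i$ on the event~$\mathcal{G}_i$.

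Once $\mathcal{G}_i$ is in place, let $\mathcal{F} := \sigma(\eta|_{(\R^d\times E)\setminus\cup_i R_i})$. Conditionally on $\mathcal{F}$ the restrictions $\eta|_{R_i}$ are independent Poisson processes, and $\mathcal{G}_i \in \mathcal{F}$. Writing $F_n = \sum_i \xi_i$, on $\mathcal{G}_i$ the conditional variance $\Var(\xi_i\mid\mathcal{F})$ is bounded below by a positive constant coming from the $\mathcal{A}_i$ vs.\ $\mathcal{B}_i$ dichotomy. The conditional variance formula then yields
\[
\Var(F_n) \geq \E[\Var(F_n\mid\mathcal{F})] \geq \sum_i \E\bigl[\ind_{\mathcal{G}_i}\Var(\xi_i\mid\mathcal{F})\bigr] \gtrsim q_0\, N \gtrsim n^d.
\]
The main obstacle is step (b) in constructing $\mathcal{G}_i$ uniformly across all three models. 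In the $\beta$- and Gaussian cases the super-polynomial or doubly-exponential tail decay of $T$ from Proposition~\ref{lm:tailbounds} makes this rather direct. The $\beta'$-case is more delicate, due to the slow power-tail bound reflecting the long-range interactions pointed out in Remark~\ref{rem:1}, but for $\beta > d/2+1$ the tail is still integrable, which I expect to be enough to realize~(b) with a bounded positive probability after choosing the buffer region and $I$ in terms of $\beta$ and $d$.
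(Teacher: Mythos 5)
Your proposal uses the right high-level tool (condition on the outer configuration and lower-bound the total variance by $\E[\Var(F_n\mid\mathcal{F})]$), and it correctly identifies that the difficulty is the potential long-range effect of adding/removing a seed. However, the way you propose to handle that difficulty is genuinely different from the paper and contains concrete gaps.

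The paper does not rely on an ``outer good event'' at all, and in fact does not use the tail bounds of Proposition~\ref{lm:tailbounds} for the variance lower bound. Instead, each block $K(z)$ contains $d+2$ small sub-slabs $K_0(z),\dots,K_{d+1}(z)$, where $K_1,\dots,K_{d+1}$ hold \emph{anchor} seeds placed at the vertices of a small regular simplex around $K_0$. The geometry is set up so that (i) the downward paraboloid $\Pi^{\downarrow}_-(v,h)$ of any $(v,h)\in K_i(z)$ is contained in $K(z)$, so each anchor contains its own spatial center in its cell and is extreme \emph{regardless of the outer configuration}; and (ii) there are two subsets of $K_0\times\cdots\times K_{d+1}$ of positive measure on which the central seed is, respectively, always extreme and never extreme, again \emph{regardless of the outer configuration}. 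This makes the conditional probability that the central seed is extreme uniformly bounded away from $0$ and $1$, with no uniformity-in-the-outer-configuration argument needed. That is the key idea your proposal is missing.

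Two concrete problems with the proposed construction. First, the event $\mathcal{G}_i$ you describe --- ``no outside seed has its extremeness status flipped when a single point in $R_i$ is added or removed'' --- cannot be $\mathcal{F}$-measurable, because the extremeness of an external seed is a functional of the \emph{entire} configuration, including the points of $\eta$ lying in the other slabs $R_j$, $j\neq i$. You would have to redefine $\mathcal{G}_i$ and the conditioning in a way that breaks the claimed independence structure. Second, even setting measurability aside, it is not clear that $\P(\mathcal{G}_i)\ge q_0>0$ uniformly, particularly in the $\beta'$ model: Proposition~\ref{lm:tailbounds} bounds the coverage time of a single fixed seed, but condition~(b) is a statement about \emph{all} external seeds simultaneously, including seeds at arbitrary past times whose cells reach arbitrarily close to $Q_i$ and could be annihilated by a single inserted point. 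Passing from per-seed tail bounds to a uniform positive probability for this simultaneous robustness would require substantial additional work (and the paper sidesteps it entirely via the anchor construction). Your sketch acknowledges the $\beta'$ case is ``more delicate'' but does not supply the argument, and I do not see how to supply it without an idea analogous to the paper's local caging.
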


\begin{remark}\label{rem:Variance}
    We note that $C_1$ is a constant between $0$ and $\infty$ for all values $\beta\ge -1$, while $C_2$ tends to $0$ as $\beta$ approaches its critical value $d/2+1$. 
    
    Let us consider the $\beta'$ model more closely. By combining \cite[Proposition 3]{GKT22} with \cite[Theorem 6]{GKT22} and \cite[Theorem 2]{GKT22}, we obtain that
    \begin{align*}
    \E(F_n(\eta'_{\beta}))&=(2n)^d\Big({ \gamma\Gamma(\beta-{d\over 2}-1)\over \sqrt{\pi}\Gamma(\beta -{d+1\over 2})}\Big)^{{d\over d-2\beta+2}}{\Gamma({(d+1)(2\beta-d-1)+1\over 2})\over\Gamma({(d+1)(2\beta-d-1)-d+1\over 2})}{\Gamma({(d+1)(2\beta-d-2)\over 2})\over\Gamma({(d+1)(2\beta-d-1)\over 2})}\\
&\qquad\times{\Gamma(d+1-{d\over d+2-2\beta})\over \Gamma({d+2\over 2})}{\Gamma(\beta -{d+1\over 2})^{d+1}\over \Gamma(\beta -{d\over 2}-1)^{d+1}}\mathbb{J}'_{d+1,1}\big(\beta-1/2),
    \end{align*}
    where $\mathbb{J}'_{d+1,1}\big(\beta-1/2)$ is the expected sum of internal angles of $d$-dimensional $\beta'$-simplex with parameter $\beta-1/2$ (see \cite{Kab21} for more details). We note that 
    \[
    \mathbb{J}'_{d+1,1}\big({d+1\over 2}\big)=\lim_{\beta\to d/2+1}\mathbb{J}'_{d+1,1}\big(\beta-1/2)\in (0,1),
    \]
    since in this case $\mathbb{J}'_{d+1,1}\big({d+1\over 2}\big)$ describes the expected angle sum of a $d$-dimensional random simplex, whose vertices are independent Cauchy distributed random points in $\R^d$, which in turn implies that the simplex is non-degenerate almost surely. Denote by $\varepsilon = \beta-{d\over 2}-1$. Then using that $\Gamma(z)={1\over z} + O(1)$ as $z\to 0$, we get
    \begin{align*}
        \lim_{\beta\to d/2+1}\E(F_n(\eta'_{\beta}))&=(2n)^d\,C\,\lim_{\varepsilon\to 0}\Big({\sqrt{\pi}\Gamma(\varepsilon+{1\over 2}) \over \gamma\Gamma(\varepsilon)}\Big)^{{d\over 2\varepsilon}}{\Gamma((d+1)\varepsilon)\Gamma(d+1+{d\over 2\varepsilon})\over \Gamma(\varepsilon)^{d+1}}\\
        &=(2n)^d\,C\,\lim_{\varepsilon\to 0}\Big({\pi \over \gamma\Gamma(\varepsilon)}\Big)^{{d\over 2\varepsilon}}\Gamma\Big(d+1+{d\over 2\varepsilon}\Big)=:(2n)^d\,C\,\lim_{\varepsilon\to 0} g(\varepsilon).
    \end{align*}
    Consider
    \begin{align*}
        \log g(\varepsilon)={d\over 2\varepsilon}\Big(\log \Big({\pi\over \gamma}\Big)-\log\Gamma(\varepsilon)\Big)+\log\Gamma\Big(d+1+{d\over 2\varepsilon}\Big).
    \end{align*}
    Using the Stirling approximation \cite[Equation 5.11.8]{NIST} $\log \Gamma(z+a)=\big(z+a-{1\over 2}\big)\log z-z+O(1)$ as $z\to\infty$ and $\log\Gamma(z)=-\log(z)+O(z)$ as $z\to 0$ then yields
    \begin{align*}
        \log g(\varepsilon)={d\over 2\varepsilon}\log \Big({d\pi\over 2e\gamma}\Big)+\Big(d+{1\over 2}\Big)\log\Big({d\over 2\varepsilon}\Big)+O(1).
    \end{align*}
    Hence, we obtain
    \[
    \lim_{\beta\to d/2+1}\E(F_n(\eta'_{\beta}))=
    \begin{cases}
    \infty,\qquad &\gamma\leq {d\pi\over 2e},\\
    0,\qquad &\gamma >{d\pi\over 2e}.
    \end{cases}
    \]
    This in particular implies that for $\gamma >{d\pi\over 2e}$ we have $\lim_{\beta\to d/2+1}\Var(F_n(\eta'_{\beta}))=0$.
\end{remark}

\begin{proof}[Proof of Proposition \ref{prop:VarianceBound}]
    We note here that in case of $\eta_{\beta}$ and $\widetilde \eta$, these bounds already appear in \cite[Theorem 7]{GKT23} and the bound in the case of $\eta'_{\beta}$ can be obtained following similar arguments. However for completeness and the readers convenience, we present a complete and unified proof here.

    Let $\eta$ be one the processes $\eta_{\beta}$, $\eta'_{\beta}$ or $\widetilde \eta$ and let $\Q$ be the corresponding intensity measure. The idea of the proof is to condition on a certain 'good event' $\mathcal{G}$ on which the variance would be strictly positive and then to use the following formula for the total variance:
    \begin{equation}\label{eq:TotalVariance}
    \Var(F_n(\eta))=\E(\Var(F_n(\eta)|\mathcal{G}))+\Var(\E(F_n(\eta)|\mathcal{G}))\ge \E(\Var(F_n(\eta)|\mathcal{G})).
    \end{equation}

    Let us start by constructing this `good' event. Let $\ell(\eta)\in \R$ (we will choose $\ell(\eta_\beta)=\ell(\widetilde \eta)=1/8$ and $\ell(\eta'_{\beta})=-1/16$). Let $\delta>0$ be sufficiently small (i.e.\ $\delta\leq 1/16$). For any $z\in\mathbb{Z}^d\cap [-n,n]^d$, consider the set 
    \[
    K(z):=\{(v,h)\in \R^{d+1}\colon h\leq \ell(\eta),\,\|v-z\|\leq 4\delta+\sqrt{\ell(\eta)-h}\}.
    \]
    Further, let $x_1(z),\ldots,x_{d+1}(z)\in \partial B^d(z,3\delta)$ be vertices of regular simplex and define 
    \[
    K_i(z)=B^d(x_i(z),\delta)\times [\ell(\eta)-19\delta^2,\ell(\eta)-18\delta^2]\subset K(z)
    \]
    for $1\leq i\leq d+1$. Finally let 
    \[
    K_0(z)=B^d(z,\delta)\times [\ell(\eta)-9\delta^2,\ell(\eta)]\subset K(z).
    \]
    Let $A(z)$ be the event that there is exactly one point of $\eta$ in each of the sets $K_0(z),\ldots, K_{d+1}(z)$ and no other points of the process in $K(z)$. Since all sets $K_0(z),\ldots, K_{d+1}(z)$ are disjoint for sufficiently small $\delta$ and due to independence properties of a Poisson point process, we get
    \[
        \P(A(z))=\P\Big(\big|\big(K(z)\setminus\bigcup_{i=0}^{d+1}K_i(z)\big)\cap \eta\big|=0\Big)\prod_{i=0}^{d+1}\P(|K_i(z)\cap \eta|=1)=\prod_{i=0}^{d+1}\Q(K_i(z))\exp(-\Q(K(z))),
    \]
    since for any Borel set $B\subset\R^{d+1}$ the random variable $|B\cap \eta|$ has the same distribution as Poisson random variable with parameter $\Q(B)$. Next we note that $\ell(\eta_{\beta})>\ell(\eta_{\beta})-18\delta^2>0$ for sufficiently small $\delta>0$ and, hence, considering the $\beta$-model,
    \begin{align*}
    \Q_{\beta}(K_i(z))&={\gamma\, c_{d,\beta}\kappa_d\delta^d\over \beta+1}\Big(\big(\ell(\eta_{\beta})-18\delta^2)^{\beta+1}-\big(\ell(\eta_{\beta})-19\delta^2\big)^{\beta+1}\Big)
    \in (0,\infty),\qquad 1\leq i\leq d+1,\\
    \Q_{\beta}(K_0(z))&={\gamma\, c_{d,\beta}\kappa_d\delta^d\over(\beta+1)}\Big(\ell(\eta_{\beta})^{\beta+1}-\big(\ell(\eta_{\beta})-9\delta^2\big)^{\beta+1}\Big)
    \in (0,\infty),\\
    \Q_{\beta}(K(z))&=\gamma\,c_{d,\beta}\kappa_d\int_{0}^{\ell(\eta_{\beta})}(4\delta+\sqrt{\ell(\eta_{\beta})-h})^dh^{\beta}\md h\in (0,\infty).
    \end{align*}
    Combining these together, we conclude that for any $z \in \mathbb{Z}^d\cap [-n,n]^d$ it holds that $\P(A(z))>C$, where $C>0$ is some constant depending only on $d$, $\delta$, $\gamma$ and $\beta$. Similar computations for $\eta'_{\beta}$, together with the fact that $\ell(\eta_{\beta}')<0$ lead to
\begin{align*}
\Q'_{\beta}(K_i(z))&={\gamma\,c'_{d,\beta}\kappa_d\delta^d\over\beta-1}\Big(\big(19\delta^2-\ell(\eta'_{\beta}))^{-\beta+1}-\big(18\delta^2-\ell(\eta'_{\beta})\big)^{-\beta+1}\Big)\in (0,\infty),\qquad 1\leq i\leq d+1,\\
\Q'_{\beta}(K_0(z))&={\gamma\,c'_{d,\beta}\kappa_d\delta^d\over \beta-1}\Big(\big(9\delta^2-\ell(\eta'_{\beta})\big)^{-\beta+1}-(-\ell(\eta'_{\beta}))^{-\beta+1}\Big)\in (0,\infty),\\
\Q'_{\beta}(K(z))&=\gamma\,c'_{d,\beta}\int_{-\infty}^{\ell(\eta'_{\beta})}(4\delta+\sqrt{\ell(\eta'_{\beta})-h})^d(-h)^{-\beta}\md h\in (0,\infty),
\end{align*}
and, hence, the estimate $\P(A(z))>C>0$ with constant $C$ depending only on $d$, $\delta$, $\gamma$ and $\beta$ holds also in this case. Finally, for $\widetilde \eta$, we obtain
\begin{align*}
\widetilde\Q(K_i(z))&=\kappa_d\delta^de^{\ell(\widetilde\eta)-18\delta^2}(1-e^{-\delta^2})\in (0,\infty),\qquad 1\leq i\leq d+1,\\
\widetilde\Q(K_0(z))&=\kappa_d\delta^de^{\ell(\widetilde\eta)}(1-e^{-9\delta^2})\in (0,\infty),\\
\widetilde\Q(K(z))&=\int_{-\infty}^{\ell(\widetilde\eta)}(4\delta+\sqrt{\ell(\widetilde\eta)-h})^de^h\md h\in (0,\infty),
\end{align*}
yielding $\P(A(z))>C>0$, where $C$ is a constant depending only on $d$.

Now let $\mathcal{G}$ be the $\sigma$-algebra that determines the configuration of $\eta$ outside of all those sets $K(z)$, $z\in\mathbb{Z}^d\cap[-n,n]^d$, such that $\ind(A(z))=1$. Consider $\E(\Var(F_n(\eta)|\mathcal{G}))$. Let $K(z)$ be one of the sets such that $\ind(A(z))=1$. Let $(V_i(z),H_i(z))\in K_i(z)$ be the unique point of the process in $K_i(z)$. According to the construction of $K(z)$ and the event $A(z)$, each of the points $(V_1(z),H_1(z)),\ldots, (V_{d+1}(z),H_{d+1}(z))$ form a cell of $\cL(\eta)$ almost surely. Indeed, for any $(v,h)\in K_i(z)$ it holds that
\[
\Pi^{\downarrow}_{-}(v,h)=\{(w,s)\in\R^{d+1}\colon \|v-w\|^2\leq h-s\}\subset K(z).
\]
This follows since for any $(w,s)\in \Pi^{\downarrow}_{-}(v,h)$ we have that $s\leq h\leq \ell(\eta)$ and 
\[
\|w-z\|\leq \|w-v\|+\|v-z\|\leq \sqrt{h-s}+4\delta\leq \sqrt{\ell(\eta)-s}+4\delta,
\]
implying $(w,s)\in K(z)$. At the same time 
\[
\Pi^{\downarrow}_{-}(v,h)\cap K_j(z)=\emptyset
\]
for any $j\neq i$. In order to show this, we first note that for any $1\leq i<j\leq d+1$ we have $\|x_i(z)-x_j(z)\|\ge 3\sqrt{2}\delta$ by Jung's theorem. let $(w,s)\in K_j(z)$, then
\[
\|v-w\|\ge \|x_i(z)-x_j(z)\|-\|v-x_i(z)\|-\|w-x_j(z)\|\ge (3\sqrt{2}-2)\delta.
\]
On the other hand, we have $\sqrt{h-s}\leq \delta$ and hence,
\[
\|v-w\|\ge (3\sqrt{2}-2)\delta>\delta\ge \sqrt{h-s},
\]
implying $(w,s)\not\in \Pi^{\downarrow}_{-}(v,h)$. Together, this implies that for any $(V_i(z),H_i(z))$, it holds that $V_i(z)\in C((V_i(z),H_i(z)),\eta)\neq \emptyset$. Indeed, for any $(v,h)\neq (V_i(z),H_i(z))\in\eta$ we get
\[
{\rm pow}(V_i(z),(V_i(z),H_i(z)))=H_i(z)< \|V_i(z)-v\|^2+h={\rm pow}(V_i(z),(v,h)),
\]
since otherwise $(v,h)\in \Pi^{\downarrow}_{-}(V_i(z),H_i(z))$, which is a contradiction to $A(z)$.

Moreover, the point $(V_0(z),H_0(z))$ forms a cell with probability $p(\eta)=p\in (0,1)$. In order to show this, we note that for any $(v,h)\in K_0(z)$ it holds that $\Pi_{-}^{\downarrow}(v,h)\subset K(z)$. Then $(V_0(z),H_0(z))$ forms a cell if $(V_i(z),H_i(z))\not\in\Pi_{-}^{\downarrow}(V_0(z),H_0(z))$ for all $1\leq i\leq d+1$. This happens for instance for $V_0(z)=z$, $H_0(z)=\ell(\eta)-9\delta^2$, $V_i(z)=4(x_i(z)-z)/3+z$, $H_i(z)=\ell(\eta)-18\delta^2$ since in this case
\[
\|V_0(z)-V_i(z)\|=4\delta>3\delta=\sqrt{H_0(z)-H_i(z)},
\]
for any $1\leq i\leq d+1$. By continuity there is a subset $A\subset K_0(z)\times K_1(z)\times\ldots\times K_{d+1}(z)$ of positive Lebesgue measure, such that for any $(v_0,h_0,v_1,h_1,\ldots,v_{d+1},h_{d+1})\in A$, the point $(v_0,h_0)$ forms a cell. On the other hand, the point $(V_0(z),H_0(z))$ does not form a cell if for any $(w,s)$ satisfying 
\[
\|w-V_0(z)\|\leq \sqrt{s-H_0(z)},
\]
there is $1\leq i\leq d+1$ with 
\[
\|w-V_i(z)\|\leq \sqrt{s-H_i(z)}.
\]
This holds for instance when $V_0(z)=z$, $H_0(z)=\ell(\eta)$, $V_i(z)=2(x_i(z)-z)/3+z$, $H_i(z)=\ell(\eta)-19\delta^2$. In this case, let $V_j(z)$ be such that $\|w-V_j(z)\|=\min_{1\leq i\leq d+1}\|w-V_i(z)\|$. Then 
\begin{align*}
\|V_j(z)-w\|&\leq \max(\|V_0(z)-w\|,\|V_0(z)-V_j(z)\|)\\
&\leq \max(\sqrt{s-H_0(z)}, \sqrt{H_0(z)-H_j(z)})\leq \sqrt{s-H_j(z)},
\end{align*}
where the second inequality holds since $\|V_0(z) -V_j(z)\|=2\delta<\sqrt{19}\delta=\sqrt{H_0(z)-H_j(z)}$ and the last inequality follows since $H_0(z)\leq s$ and $H_j(z)=\ell(\eta)-19\delta^2<\ell(\eta)=H_0(\eta)$.
Again by continuity there is a subset $A'\subset K_0(z)\times K_1(z)\times\ldots\times K_{d+1}(z)$ of positive Lebesgue measure, such that for any $(v_0,h_0,v_1,h_1,\ldots,v_{d+1},h_{d+1})\in A'$ the point $(v_0,h_0)$ does not form a cell. Finally condition on $A(z)$ the point $(V_i(z),H_i(z))$ is distributed in $K_i(z)$ according to $\Q/\Q(K_i(z))$ for any $0\leq i\leq d+1$ and $(V_0(z),H_0(z)),\ldots, (V_{d+1}(z),H_{d+1}(z))$ are independent (follows e.g., by \cite[Theorem 3.2.2]{SW}). Hence, since $\Q$ has a density with respect to Lebesgue measure
\[
p=\P((V_0(z),H_0(z))\text{ forms a cell})\ge {\Q^{\otimes (d+2)}(A)\over \Q(K_0(z))\cdot\ldots\cdot\Q(K_{d+1}(z))}>0,
\]
and 
\[
1-p=\P((V_0(z),H_0(z))\text{ does not form a cell})\ge {\Q^{\otimes (d+2)}(A')\over \Q(K_0(z))\cdot\ldots\cdot\Q(K_{d+1}(z))}>0,
\]
which implies $p\in (0,1)$.

Finally, for two different sets $K(z_1)$ and $K(z_2)$ with $\ind(A(z_1))=\ind(A(z_2))=1$, the events $\{(V_0(z_1),H_0(z_1))\text{ forms a cell}\}$ and $\{(V_0(z_2),H_0(z_2))\text{ forms a cell}\}$ are independent. This follows from the fact that $K(z_1)\cap K_i(z_2)=\emptyset$, $0\leq i\leq d+1$ for any $z_1\neq z_2$ if $\delta=1/16$.

Hence,
\begin{align*}
\E(\Var(F_n(\eta)|\mathcal{G}))&=\E\sum_{z\colon \ind(A(z))=1}\Var_{(V_0(z),H_0(z),\ldots,V_{d+1}(z), H_{d+1}(z))}(F_n(\eta)|\mathcal{G})\\
&\ge \E\sum_{z\colon \ind(A(z))=1}p(1-p)\ge C n^d,
\end{align*}
since we have either $d+1$ cells with probability $1-p(\eta)$ (i.e.\ $(V_0(z), H_0(z))$ does not form a cell, but $(V_i(z),H_i(z))$, $1\leq i\leq d+1$ do form cells) or $d+2$ cells with probability $p(\eta)$ (i.e. $(V_0(z), H_0(z))$ forms an additional cell). The proof follows.
\end{proof}

\section{Proof of Theorem \ref{thm:1}}\label{sec:proof1.1}

Below, we first list some simple integral bounds that are straightforward to check and which will be used throughout this section. 

We will often use that for constants $b \in \R$ and $a, c \neq 0$ with $(b+1)/a>0$,
\begin{equation}\label{eq:hint}
	\int_{0}^{\infty} h^b\exp(-c h^a)\md h \lesssim \int_0^\infty w^{\frac{b+1}{a} -1}\exp(-c w) \md w<\infty.
\end{equation}

Another simple fact, which will be useful for us in the case of the $\widetilde \eta$ models is the following: for $a,b,c,k>0$, one has
\begin{equation}\label{eq:hintgaussian}
	\int_{\R} ((1-h) \vee 1)^{k}e^{ah} \exp(-b e^{ch}) \md h <\infty.
\end{equation}

Also, for any $v_2 \in \R^d$ and $a, c>0$, 
\begin{equation}\label{eq:vint}
	\int_{\R^d}\exp(-c \|v_1-v_2\|^a) \md v_1 <\infty.
\end{equation}

Moreover, note that for $v_1 \in W_n$ and $v_2 \in W_{2\sqrt{d} n}^c$, it holds that
\begin{equation}\label{eq:te}
    \|v_2 - v_1\| \ge \|v_2\| - \|v_1\| \ge \|v_2\|/2
\end{equation}
Thus, we have
\begin{align}\label{eq:vintwn}
	&\int_{W_n} \exp(-c \|v_1-v_2\|^a) \md v_1 \nonumber\\
    &\lesssim \left[\ind(v_2\in W_{2\sqrt{d}n})+ \ind(v_2\in W_{2\sqrt{d}n}^c) \exp(-(c/2^{a+1})\|v_2\|^a)\right] \int_{W_n} \exp(-(c/2) \|v_1-v_2\|^a) \md v_1 \nonumber\\
    &\lesssim \ind(v_2\in W_{2\sqrt{d}n})+ \ind(v_2\in W_{2\sqrt{d}n}^c) \exp(-(c/2^{a+1})\|v_2\|^a).
\end{align}

\bigskip

Now we are ready for the proof of Theorem \ref{thm:1}. In the setting of Theorem \ref{thm:KolBd}, we fix $\X=\R^d \times \R$, equipped with the Euclidean metric on $\R^{d+1}$, with an associated measure $\Q(\md v, \md h)$ absolutely continuous w.r.t.\ the Lebesgue measure, which we will take as the intensity measure of the processes $\eta_\beta, \eta'_\beta$ or $\widetilde \eta$. Noting that a point $x=(v,h)$ forms a cell in $\eta \in \{\eta_\beta, \eta'_\beta,\widetilde \eta\}$ if and only if it is an extreme point, we consider the score function $\xi_n: \X \times \bN \to \R$ given by
$$
\xi_n((v,h), \cM):= \ind (v \in W_n) \ind ((v,h) \text{ is an extreme point in $\cM$}),
$$
for $\cM \in \bN$ and $(v,h) \in \cM$. Thus our functional of interest, that is $F_n(\eta)$ in Theorem \ref{thm:1} can be expressed as a sum as in \eqref{eq:scoresum} with the scores being $\xi_n$. To apply Theorem \ref{thm:KolBd}, we then only need to check conditions (A0) - (A2). One can readily check that this score function satisfies the monotonicity condition \eqref{eq:ximon} in assumption (A0). Next, we define a region of stabilization for the scores $\xi_n$. For $\cM \in \bN$ and $(v,h) \in \cM$, letting $T=T((v,h),\cM)$ be the cover time defined at \eqref{eq:covertime1}, we can define the region of stabilization for the scores $\xi_n$ as
\begin{equation}
	R_n((v,h), \cM): = \begin{cases}
		D((v,h), T((v,h), \cM)),\,& \text{if} \quad T>h, v \in W_n,\\
		\emptyset,\, & \text{otherwise},
	\end{cases}
\end{equation}
where $D((v,h), T((v,h), \cM))$, for $(v,h)$ with $T>h$ is given by (see also Figure \ref{Fig:RS})
$$
D((v,h), T((v,h), \cM)):=\{(v',h') \in \R^{d+1}: h'\le T, \sqrt{T-h} + \sqrt{T-h'} \ge \|v - v'\|\}.
$$
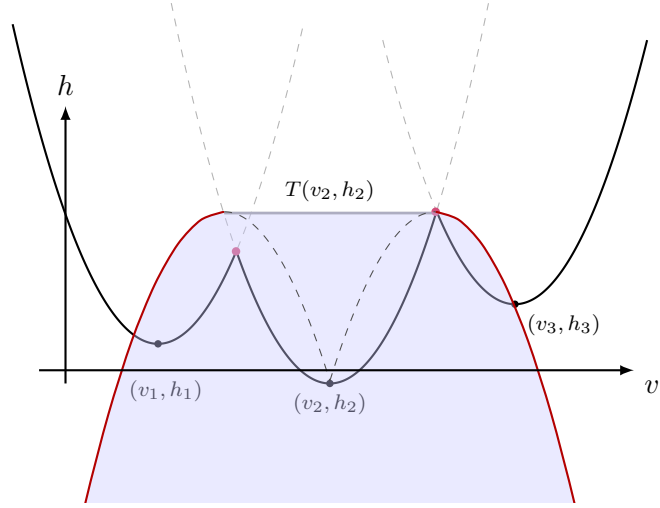
\begin{figure}
\centering
		\begin{tikzpicture}[x=50pt,y=50pt,yscale=1,xscale=1,>=latex]
					\path (-4.5,3);				
	% Seed coordinates (v_i, h_i)
	\coordinate (L) at (-1.3,0.2);
	\coordinate (M) at (0,-0.1);
	\coordinate (R) at (1.4,0.5);
	
	% Upward parabolas (seeds)
	% Left
	\draw[thick,domain=-2.4:-0.7,samples=60,smooth,variable=\y]
	plot({\y},{0.2 + 2*(\y + 1.3)^2});
	\draw[gray!60,dashed,domain=-0.7:-0.2,samples=60,smooth,variable=\y]
	plot({\y},{0.2 + 2*(\y + 1.3)^2});
	% Middle (visible part solid, rest dashed)
	\draw[thick,domain=-0.71:0.81,samples=60,smooth,variable=\y]
	plot({\y},{-0.1 + 2*(\y - 0)^2});
	\draw[gray!60,dashed,domain=-1.2:-0.71,samples=40,smooth,variable=\y]
	plot({\y},{-0.1 + 2*(\y - 0)^2});
	\draw[gray!60,dashed,domain=0.81:1.2,samples=40,smooth,variable=\y]
	plot({\y},{-0.1 + 2*(\y - 0)^2});
	% Right
	\draw[thick,domain=0.81:2.4,samples=60,smooth,variable=\y]
	plot({\y},{0.5 + 2*(\y - 1.4)^2});
	\draw[gray!60,dashed,domain=0.4:0.81,samples=40,smooth,variable=\y]
	plot({\y},{0.5 + 2*(\y - 1.4)^2});
	
	\draw[gray!80,line width=1pt] (-0.8,1.19) -- (0.8,1.19);
	
	% Apex points
	\fill[black] (L) circle (1.4pt);
	\fill[black] (M) circle (1.4pt);
	\fill[black] (R) circle (1.4pt);
	\node[below] at (-1.24,0) {\scriptsize $(v_1,h_1)$};
	\node[below] at (M) {\scriptsize $(v_2,h_2)$};
	\node[below] at (1.77,0.5) {\scriptsize $(v_3,h_3)$};
	
% Draw intersections (coverage limits)
% visually chosen for clarity
\coordinate (A) at (-0.71,.9);
\coordinate (B) at (0.8,1.2);
\fill[purple!70] (A) circle (1.6pt);
\fill[purple!70] (B) circle (1.6pt);

	% Downward-parabola union from apices along segment (-0.8,1.2)--(0.8,1.2)
	\def\T{1.2} % coverage height
	\node[above] at (0,1.2) {\scriptsize $T(v_2,h_2)$};
	\begin{scope}
		\clip (-3,-1) rectangle (5,1.2); % hide part above h=1.2
		
		% Fill the union region (blue)
		\fill[blue!20,opacity=0.4]
		plot[domain=-5:-0.8,smooth,variable=\y] ({\y},{\T - 2*(\y + 0.8)^2})
		-- (-0.8,\T) -- (0.8,\T)
		-- plot[domain=0.8:5,smooth,variable=\y] ({\y},{\T - 2*(\y - 0.8)^2})
		-- (5,-1.01) -- (-5,-1.01) -- cycle;
		
		% Solid red visible boundaries (outside)
		\draw[red!70!black,thick]
		plot[domain=-5:-0.8,smooth,variable=\y] ({\y},{\T - 2*(\y + 0.8)^2});
		\draw[red!70!black,thick]
		plot[domain=0.8:5,smooth,variable=\y] ({\y},{\T - 2*(\y - 0.8)^2});
		
		% Dashed red interior halves (inside blue region)
		\draw[gray!60!black,dashed]
		plot[domain=-0.8:0.0,smooth,variable=\y] ({\y},{\T - 2*(\y + 0.8)^2});
		\draw[gray!60!black,dashed]
		plot[domain=0.0:0.8,smooth,variable=\y] ({\y},{\T - 2*(\y - 0.8)^2});
		\end{scope}
			
			% Axes
			\draw[->,thick] (-2.2,0) -- (2.3,0) node[below right]{$v$};
			\draw[->,thick] (-2.0,-0.1) -- (-2.0,2) node[above]{$h$};
		\end{tikzpicture}
        \caption{Illustration of the region $D((v_2,h_2), T(v_2,h_2),\cM)$ for some point configuration $\cM$ containing these three points such that the height $T(v_2, h_2)$ is the coverage time of the cell of $(v_2,h_2)$ defined at \eqref{eq:covertime1}}\label{Fig:RS}
        \end{figure}
Again, it is straightforward to see that $R_n$ satisfies conditions (A1.1) - (A1.4) in Theorem \ref{thm:KolBd}. Also, condition (A2) therein is satisfied with any $p>0$ and $M_p =1$. Thus, Theorem \ref{thm:KolBd} applies, and we need to upper bound the quantities on the right-hand side of the bounds therein to prove our claim.

To that end, write $x,y,z \in \X$ as $x = (v_1,h_1), y = (v_2,h_2), z=(v_3,h_3)$. We start by bounding $\P(x \in R_n(y, \eta + \delta_{y}))$. Note in this case that we must have $ \emptyset \neq R_n(y, \eta + \delta_{y}) \ni x$, so that
$$
\|v_1-v_2\|\leq 2(\sqrt{T(y, \eta + \delta_{y})-h_1}\vee \sqrt{T(y, \eta + \delta_{y})-h_2})=2\sqrt{T(y, \eta + \delta_{y})-(h_1 \wedge h_2)},
$$ 
which implies that 
$$
T(y, \eta + \delta_{y}) \ge \|v_1-v_2\|^2/4+(h_1 \wedge h_2).
$$
Together with the observation that we also have $T(y, \eta + \delta_{y}) > h_1 \vee h_2$, this implies that
$
T(y, \eta + \delta_{y})\ge \|v_1-v_2\|^2/8+h_1/2+h_2/2.
$
Thus, we obtain for $x,y \in \X$ that
\begin{align}\label{eq:tailbd}
	&\P(x \in R_n(y, \eta + \delta_{y}))\nonumber\\
    &\le \ind(v_2 \in W_n)\P\left(T(y, \eta +\delta_{y})>h_1 \vee h_2, 
    T(y, \eta + \delta_{y}) >\frac{\|v_1 - v_2\|^2}{4}+ {h_1\wedge h_2}\right)\nonumber\\
    &\le \ind(v_2 \in W_n)\P\left(T(y, \eta +\delta_{y})>h_1 \vee h_2, 
    T(y, \eta + \delta_{y}) >\frac{\|v_1 - v_2\|^2}{8}+ \frac{h_1}{2}+\frac{h_2}{2}\right).
\end{align}
Arguing similarly, when $x,y \in R_n(z, \eta + \delta_{z})$, one must have that $T(z, \eta + \delta_{z}) > h_1 \vee h_2 \vee h_3$ as well as %$T(z, \eta + \delta_{z}) > \max\{\|v_1-v_3\|^2/8+h_1/2+h_3/2, \|v_2-v_3\|^2/8+h_2/2+h_3/2\}$, 
$T(z, \eta + \delta_{z}) > \max\{\|v_1-v_3\|^2/4+(h_1\wedge h_3), \|v_2-v_3\|^2/4+(h_2\wedge h_3)\}$, which also implies $T(z,\eta+\delta_z)>\|v_1-v_3\|^2/16+\|v_2 - v_3\|^2/16+ h_1/4 + h_2/4+h_3/2$. Hence for $x,y,z \in \X$ we have
\begin{align}
	\P(&x,y \in R_n(z, \eta + \delta_{z})) \le  \ind(v_3 \in W_n) \P\Big(T(z, \eta + \delta_{z}) > h_1 \vee h_2 \vee h_3,\notag\\
      &\qquad \qquad T(z, \eta + \delta_{z}) >\frac{\|v_1 - v_3\|^2}{4} + (h_1\wedge h_3),T(z, \eta + \delta_{z}) >\frac{\|v_2 - v_3\|^2}{4} + (h_2\wedge h_3)\Big)\notag\\
	&\le \ind(v_3 \in W_n) \P\Big( T(z, \eta + \delta_{z}) > h_1 \vee h_2 \vee h_3, \notag\\
    &\qquad \qquad T(z, \eta + \delta_{z}) >\frac{\|v_1 - v_3\|^2}{16} + \frac{\|v_2 - v_3\|^2}{16}+ \frac{h_1}{4} + \frac{h_2}{4}+\frac{h_3}{2}\Big).\label{eq:qjoint}
\end{align}
With these bounds, we now proceed to bound the summands appearing in the bounds in Theorem \ref{thm:KolBd} separately for the three cases of $\eta_\beta, \eta'_\beta$ and $\widetilde \eta$. In the following proofs, constants such as $c,c',C \in (0,\infty)$ may change from line to line without being mentioned, but they are always finite and do not depend on $n$. Throughout, we denote the points in $\X$ by $x = (v_1,h_1), y = (v_2,h_2), z =(v_3,h_3)$ etc.

\medskip

\noindent \underline {\em Proof of Theorem \ref{thm:1} for $\eta_\beta$:} For this model, we have $\Q (\md v, \md h)= \Q_\beta (\md v, \md h)= \gamma c_{d,\beta}h^\beta \md v \md h$ for $\beta>-1$ and $\gamma>0$. By \eqref{eq:tailbd} (dropping the condition $T(y,\eta_{\beta}+\delta_y)>h_1\vee h_2$) and Proposition \ref{lm:tailbounds}, we have that  there exists constants $c,c' \in (0, \infty)$ such that for $x,y \in \X$,
\begin{align*}
&\P(x \in R_n(y, \eta_\beta + \delta_{y})) \\
&\lesssim \ind(v_2 \in W_n)\exp\left(-c (h_2 +h_1 + \|v_1-v_2\|^2)^{d/2+\beta+1}\right)\\
& \le \ind(v_2 \in W_n)\exp\left(-c' (h_2^{d/2+\beta+1} +h_1^{d/2+\beta+1} + \|v_1-v_2\|^{2(d/2+\beta+1)})\right).
\end{align*}
Thus, integrating using \eqref{eq:hint} and \eqref{eq:vint}, it follows that the function $g$ in \eqref{eq:g} can be trivially bounded by a finite positive constant. Since $M_p = 1$, we have that $G$ in \eqref{eq:g} is bounded by some finite positive constant, so that $G \lesssim 1$.  

For $f_\alpha^{(1)}$ with $\alpha>0$ defined at \eqref{eq:fal}, using \eqref{eq:hint} in the second step and \eqref{eq:vintwn} in the third, we thus obtain for $y \in \X$ that
\begin{align}\label{eq:f1bd}
	f_\alpha^{(1)}(y) &\lesssim \int_\X \ind(v_1 \in W_{n})\exp\left(-c'\alpha (h_2^{d/2+\beta+1} +h_1^{d/2+\beta+1} + \|v_1-v_2\|^{2(d/2+\beta+1)})\right) \mathbb{Q}(\md x)\nonumber\\
	& \lesssim e^{-c'\alpha h_2^{d/2+\beta+1}} \int_{W_{n}} \exp\left( -c'\alpha\|v_1-v_2\|^{2(d/2+\beta+1)}\right) \md v_1\nonumber\\
	%& \lesssim e^{-c' h_2^{d/2+\beta+1}} \bigg[ \ind(v_2 \in W_{2n}) \nonumber\\
	%& \quad + \ind(v_2 \in W_{2n}^c) e^{-c'' \|v_2\|^{d/2+\beta+1}}\int_{W_{n}} \exp\left( -c'''\|v_1-v_2\|^{d/2+\beta+1}\right) \lambda (\md v_1)\bigg]\nonumber\\
	& \lesssim e^{-c'\alpha h_2^{d/2+\beta+1}} \left[ \ind(v_2 \in W_{2\sqrt{d}n}) + e^{-c''\alpha \|v_2\|^{2(d/2+\beta+1)}} \right].
\end{align}
This yields, upon using \eqref{eq:hint} and \eqref{eq:vint}, that for any $\alpha>0$,
\begin{equation}\label{eq:b1}
    \mathbb{Q}f_{\alpha}^{(1)} , \mathbb{Q}(f_{\alpha}^{(1)} )^2 \lesssim n^d.
\end{equation}

Similarly, for any $\alpha>0$, again using \eqref{eq:hint} and \eqref{eq:vint}, for $y \in \X$ we have
\begin{align*}
	f_\alpha^{(2)}(y) &\lesssim \int_\X \ind(v_2 \in W_{n})\exp\left(-c'\alpha (h_2^{d/2+\beta+1} +h_1^{d/2+\beta+1} + \|v_1-v_2\|^{2(d/2+\beta+1)})\right) \mathbb{Q}(\md x)\\
	& \lesssim e^{-c'\alpha h_2^{d/2+\beta+1}} \ind(v_2 \in W_{n}) \int_{\R^d} \exp\left( -c'\alpha\|v_1-v_2\|^{2(d/2+\beta+1)}\right) \md v_1\\
	& \lesssim e^{-c'\alpha h_2^{d/2+\beta+1}} \ind(v_2 \in W_{n}).
\end{align*}
By \eqref{eq:hint}, we thus obtain
\begin{equation}\label{eq:b2}
    \mathbb{Q}f_{\alpha}^{(2)} , \mathbb{Q}(f_{\alpha}^{(2)} )^2 \lesssim n^d.
\end{equation}

Next applying \eqref{eq:qjoint} (and dropping the condition $T(z,\eta_{\beta}+\delta_z)>h_1\vee h_2\vee h_3$) along with the fact that 
\begin{align*}
    &\|v_1-v_3\|^2+\|v_2-v_3\|^2 \ge \frac{1}{2}(\|v_1-v_3\|^2+\|v_2-v_3\|^2) + \frac{1}{4}\|v_2 - v_3\|^2 \\
    &\ge \frac{1}{4}(\|v_1-v_3\|+\|v_2-v_3\|)^2 + \frac{1}{4}\|v_2 - v_3\|^2\ge \frac{1}{4}(\|v_1 - v_2\|^2 + \|v_2 - v_3\|^2)
\end{align*}
for the first inequality and \eqref{eq:hint} and \eqref{eq:vint} for the second, for $x,y \in \X$ we can bound
\begin{align*}
	&q(x,y) \lesssim \int_\X \ind(v_3 \in W_{n}) \\
	&\,\,\, \times \exp\big(-c' (h_1^{d/2+\beta+1} +h_2^{d/2+\beta+1} +h_3^{d/2+\beta+1}+  \|v_1-v_2\|^{2(d/2+\beta+1)} +  \|v_2-v_3\|^{2(d/2+\beta+1)})\big) \mathbb{Q}(\md z)\\
	& \qquad \quad \lesssim e^{-c' (h_1^{d/2+\beta+1} +h_2^{d/2+\beta+1} +  \|v_1-v_2\|^{2(d/2+\beta+1)} )}.
\end{align*}
Thus, again by \eqref{eq:hint}, for $y \in \X$ we have
\begin{align*}
	f_\alpha^{(3)}(y) & \lesssim \int_{\X} q(x,y)^\alpha \Q(\md x)
	%&\lesssim e^{-c' h_2^{d/2+\beta+1}} \int_{W_n} \int_{\R^d}\exp\left(-c' (\|v_1-v_3\|^{d/2+\beta+1} +  \|v_2-v_3\|^{d/2+\beta+1})\right) \lambda(\md v_1) \lambda(\md v_3)\\
	 \lesssim e^{-c' \alpha h_2^{d/2+\beta+1}} \int_{W_n} \exp\left(-c'  \alpha \|v_1-v_2\|^{2(d/2+\beta+1)}\right) \md v_1,
\end{align*}
so that we can argue as in \eqref{eq:f1bd} for $f_\alpha^{(1)}$ to obtain
\begin{equation}\label{eq:b3}
    \mathbb{Q}f_{\alpha}^{(3)} , \mathbb{Q}(f_{\alpha}^{(3)} )^2 \lesssim n^d.
\end{equation}

We proceed with estimating the other terms in the bound in Theorem \ref{thm:KolBd}. Let us next bound $\mathbb{Q} \kappa^\alpha$. From Proposition \ref{lm:tailbounds}, for $x=(v_1,h_1) \in \X$, we have
\begin{align*}
	\kappa(x)&=\P(\xi_n((v_1,h_1),\eta_\beta \cup \{(v_1,h_1)\}) \neq 0) \\
	&\le \ind(v_1 \in W_n)\P(T((v_1,h_1),\eta_\beta \cup \{(v_1,h_1)\}) >h_1)\lesssim \ind(v_1 \in W_n) \exp \left(-c' h_1^{d/2+\beta+1}\right).
\end{align*}
Thus, noting that $G \lesssim 1$ and integrating using \eqref{eq:hint} yield $\mathbb{Q} \kappa^\alpha G \lesssim n^d$. Also, noting that $g \lesssim f_\zeta^{(1)}$, we can use \eqref{eq:f1bd} to conclude that $\mathbb{Q} g^\alpha G \lesssim n^d$. In particular, this also yields by Theorem \ref{thm:KolBd} that $\Var(F_n(\eta_\beta)) \lesssim n^d$.

Finally, by Proposition \ref{prop:VarianceBound}, we have that there exists a constant $C\in (0,\infty)$ depending only on $d, \beta, \gamma$ such that for all $n \in \N$,
\begin{equation}\label{eq:Varbd}
	\operatorname{Var} F_n(\eta_\beta) \ge C n^{d}.
\end{equation}
This yields the first assertion for the variance. Moreover, plugging into Theorem \ref{thm:KolBd} this variance lower bound, the upper bounds in \eqref{eq:b1}, \eqref{eq:b2} and \eqref{eq:b3}, along with the bounds $\mathbb{Q} \kappa^\alpha G, \mathbb{Q} g^\alpha G$ above yield the second assertion. \qed

\medskip

\noindent \underline {\em Proof of Theorem \ref{thm:1} for $\tilde\eta$:} The proof for $\tilde \eta$ can be argued very similarly to $\eta_\beta$. As before, we fix $\X=\R^d \times \R$, equipped with the Euclidean metric on $\R^{d+1}$, now with the associated intensity measure $\Q(\md v, \md h):=e^h \md h \md v$. For this proof, we will use instead of Proposition \ref{lm:tailbounds} the unified but looser bound: there exist a constant $c \in (0,\infty)$ such that
\begin{equation}\label{eq:loosebd}
    \P(T(x,\widetilde \eta\cup\{x\})> H)
    	\lesssim ((1-h) \vee 1)^{d/2} \exp\big(-c e^{(H\vee h)/2}\big),\quad \forall\, x\in \X, H, h \in \R.
\end{equation}
We thus have by \eqref{eq:tailbd}, \eqref{eq:loosebd} and the Cauchy-Schwarz inequality that for $x,y \in \X$,
\begin{align*}
	&\P(x \in R_n(y, \widetilde\eta + \delta_{y})) \\
	&\le \ind(v_2 \in W_n)  \P\left(T(y, \widetilde\eta + \delta_{y}))>h_1 \vee h_2, T(y, \widetilde \eta + \delta_{y}) > \frac{\|v_1 - v_2\|^2}{8}+ \frac{h_1}{2}+\frac{h_2}{2} \right)\\
	&\lesssim \ind(v_2 \in W_n) ((1-h_2) \vee 1)^{d/2} \exp\left(-c' \left[e^{h_1/2}+e^{h_2/2}+ e^{\frac{\|v_1 - v_2\|^2}{16}+ \frac{h_1}{4}+\frac{h_2}{4}} \right]\right).
\end{align*}
Using $e^{x} \ge 1+x \ge x^a$ for $x \ge 0, a \in (0,1]$,  which implies $e^{-x}\leq x^{-a}$ for $x>0$, and setting $x=c' e^{\frac{\|v_1 - v_2\|^2}{16}+ \frac{h_1}{4}+\frac{h_2}{4}}>0$, $a=2/5$, we obtain the bound 
\[
\exp\left(-c' e^{\frac{\|v_1 - v_2\|^2}{16}+ \frac{h_1}{4}+\frac{h_2}{4}}\right) \lesssim e^{-\frac{\|v_1 - v_2\|^2}{40}} e^{-\frac{h_1+h_2}{10}}
\]
Then for $x \in \X$,
\begin{align}\label{eq:gbd}
	g(x) & \lesssim \int ((1-h_2) \vee 1)^{\zeta d/2} \exp\left(-c'\zeta \left[e^{h_1/2}+e^{h_2/2}+ e^{\frac{\|v_1 - v_2\|^2}{16}+ \frac{h_1}{4}+\frac{h_2}{4}} \right]\right) \Q(\md y)\nonumber\\
	& \lesssim \int_{\R} ((1-h_2) \vee 1)^{\zeta d/2}\exp\left(-c'\zeta [e^{h_1/2}+e^{h_2/2}]\right)e^{-(h_1+h_2)/10} e^{h_2} \md h_ 2 \lesssim e^{-h_1/10},
\end{align}
where the second inequality is due to \eqref{eq:vint} while the final one is due to \eqref{eq:hintgaussian}. 

Thus, noting that $G \lesssim 1+g^5$, with $f_\alpha^{(1)}$ as in \eqref{eq:fal}, using the simple inequality $e^{-x} \le x^{-1}$ for $x=c'\alpha  e^{\frac{\|v_1 - v_2\|^2}{16}+ \frac{h_1}{4}+\frac{h_2}{4}} > 0$ in the second step, for $y \in \X$ we obtain
\begin{align}\label{eq:f1bdtilde}
	f_\alpha^{(1)}(y)
    &\lesssim \int_\X \ind(v_1 \in W_{n})  (1+e^{-h_1/2}) ((1-h_1) \vee 1)^{\alpha d/2} \nonumber\\
    & \qquad \qquad \qquad \times \exp\left(-c'\alpha \left[e^{h_1/2}+e^{h_2/2}+ e^{\frac{\|v_1 - v_2\|^2}{16}+ \frac{h_1}{4}+\frac{h_2}{4}} \right] \right) \mathbb{Q}(\md x)\nonumber\\
	& \lesssim \exp(-c'\alpha e^{ h_2/2})\int (1+e^{-h_1/2}) ((1-h_1) \vee 1)^{\alpha d/2}\exp(-c'\alpha e^{h_1/2}) e^{h_1} \md h_1  \nonumber\\
    & \qquad \qquad \qquad \times\int_{W_{n}}  e^{ -\frac{\|v_1 - v_2\|^2}{16} - \frac{h_1}{4} - \frac{h_2}{4}} \md v_1\nonumber\\
	& \lesssim e^{-h_2/4} \exp(-c' \alpha e^{ h_2/2})\int  (1+e^{-h_1/2}) ((1-h_1) \vee 1)^{\alpha d/2}  \exp(-c' \alpha e^{h_1/2}) e^{3h_1/4} \md h_1  \nonumber\\
    & \qquad \qquad \qquad \times\int_{W_{n}} e^{-\frac{\|v_1 - v_2\|^2}{16}} \md v_1\nonumber\\
	%& \lesssim e^{-h_2/4} \exp\{-c'e^{ h_2/2}\} \int_{W_{n}} e^{-\|v_1 - v_2\|/16} \md v_1\nonumber\\
	%& \lesssim e^{-h_2/4}\exp\{-c'''e^{ h_2/2}\} \bigg[ \ind(v_2 \in W_{2n}) + \ind(v_2 \in W_{2n}^c) e^{-c''''\|v_2\|}\bigg]\nonumber\\
	& \lesssim e^{-h_2/4} \exp(-c' \alpha e^{ h_2/2}) \left[ \ind(v_2 \in W_{2\sqrt{d}n}) + e^{-c''\|v_2\|^2} \right],
\end{align}
where the final step is due to \eqref{eq:hintgaussian} and \eqref{eq:vintwn}. This yields upon integrating w.r.t.\ $\mathbb{Q}$ using \eqref{eq:hintgaussian} and \eqref{eq:vint} that for any $\alpha>0$,
\begin{equation}\label{eq:G1}
    \mathbb{Q}f_{\alpha}^{(1)} , \mathbb{Q}(f_{\alpha}^{(1)} )^2 \lesssim n^d.
\end{equation}

Arguing similarly, integrating w.r.t.\ $h_1$ and using $e^{-x} \le x^{-1}$ for $x \ge 0$ for the second inequality, for any $\alpha>0$ and $y \in \X$, we obtain
\begin{align*}
	f_\alpha^{(2)}(y) &\lesssim \int_\X \ind(v_2 \in W_{n})  (1+e^{-h_1/2})  ((1-h_2) \vee 1)^{\alpha d/2}\\
    & \qquad \qquad \qquad \times \exp\left(-c' \alpha \left[e^{h_1/2}+e^{h_2/2}+ e^{\frac{\|v_1 - v_2\|^2}{16}+ \frac{h_1}{4}+\frac{h_2}{4}} \right] \right)  \mathbb{Q}(\md x)\\
	& \lesssim e^{-h_2/4}((1-h_2) \vee 1)^{\alpha d/2} \exp(-c'\alpha e^{ h_2/2}) \ind(v_2 \in W_{n}) \int_{\R^d} e^{-\frac{\|v_1 - v_2\|^2}{16}} \md v_1\\
	& \lesssim e^{-h_2/4} ((1-h_2) \vee 1)^{\alpha d/2}\exp(-c' \alpha e^{ h_2/2}) \ind(v_2 \in W_{n}),
\end{align*}
where the second step holds by \eqref{eq:hintgaussian} and the final step is due to \eqref{eq:vint}. Upon integrating and using \eqref{eq:hintgaussian}, we get
\begin{equation}\label{eq:G2}
    \mathbb{Q}f_{\alpha}^{(2)} , \mathbb{Q}(f_{\alpha}^{(2)} )^2 \lesssim n^d.
\end{equation}

Next, from \eqref{eq:qjoint} with the Cauchy-Schwarz inequality and arguing as above using $e^{-x} \le x^{-1}$ for $x \ge 0$,  \eqref{eq:hintgaussian} and \eqref{eq:vint}, we observe for $x,y \in \X$ that
\begin{align*}
q(x,y) %&\lesssim \int_\X \ind(v_3 \in W_{n}) \exp\big(-c' e^{c''(h_1 +h_2 +h_3+\|v_1-v_3\| +  \|v_2-v_3\|)}\big) \mathbb{Q}(\md z)\\
	 &\lesssim \exp\left(-c' [e^{h_1/2}+e^{h_2/2}]\right) \\
	 & \quad \times\int ((1-h_3) \vee 1)^{d/2}\exp\left(-c' e^{h_3/2}\right) e^{h_3}\md h_3 \int_{W_n} \exp\big(-c' e^{\frac{\|v_1 - v_3\|^2}{32} + \frac{\|v_2 - v_3\|^2}{32}+ \frac{h_1}{8} + \frac{h_2}{8}+\frac{h_3}{4}}\big) \md v_3 \\
	 &\lesssim \exp\left(-c' [e^{h_1/2}+e^{h_2/2}]\right) e^{-(h_1 +h_2)/8} \int_{\R^d} e^{-\|v_1-v_2\|^2/32 - \|v_1 - v_3\|^2/32} \md v_3\\
	 & \lesssim \exp\left(-c' [e^{h_1/2}+e^{h_2/2}]\right) e^{-(h_1 +h_2)/8}  e^{-\|v_1-v_2\|^2/32}.
\end{align*}
Thus an integration yields that for $y \in \X$,
\begin{align*}
	f_\alpha^{(3)}(y) &\lesssim \exp(-c' \alpha e^{ h_2/2})e^{-\alpha h_2/8} \int (1+e^{-h_1/2}) \exp(-c' \alpha e^{h_1/2}) e^{(1-\alpha/8)h_1} \md h_1  \int_{W_{n}} e^{-\alpha\|v_1 - v_2\|^2/32} \md v_1\\
     &\lesssim \exp(-c' \alpha e^{ h_2/2})e^{-\alpha h_2/8}\left[ \ind(v_2 \in W_{2\sqrt{d}n}) + e^{-c''\|v_2\|^2} \right],
\end{align*}
where we have used \eqref{eq:hintgaussian} and \eqref{eq:vintwn} for any $\alpha \in (0,1)$. Hence by \eqref{eq:hintgaussian} and \eqref{eq:vint} for any $\alpha\in (0,1)$ we get
\begin{equation}\label{eq:G3}
    \mathbb{Q}f_{\alpha}^{(3)} , \mathbb{Q}(f_{\alpha}^{(3)} )^2 \lesssim n^d.
\end{equation}

Next, from \eqref{eq:loosebd}, for $x=(v_1,h_1) \in \X$, we have
\begin{align*}
	\kappa(x)&=\P(\xi_n((v_1,h_1),\tilde \eta \cup \{(v_1,h_1)\}) \neq 0) \\
	&\lesssim \ind(v_1 \in W_n)\, \P(T((v_1,h_1),\tilde \eta \cup \{(v_1,h_1)\}) >h_1) \lesssim \ind(v_1 \in W_n) ((1-h_1) \vee 1)^{d/2}\exp\big(-c e^{h_1/2}\big).
\end{align*}
Thus, recalling that $G(x) \lesssim 1+e^{-h_1/2}$, integration yields $\mathbb{Q} \kappa^\alpha G \lesssim n^d$. Arguing exactly similarly as for $\eta_\beta$, noting that $g \lesssim f_\zeta^{(1)}$, we can use \eqref{eq:f1bdtilde} to conclude that $\mathbb{Q} g^\alpha G \lesssim n^d$ for $\alpha \in (0,1)$. Again, this yields by Theorem \ref{thm:KolBd} that $\Var(F_n(\tilde \eta)) \lesssim n^d$.

By Proposition \ref{prop:VarianceBound}, we have that there exists a constant $C\in (0,\infty)$ depending only on $d$ such that for all $n \in \N$,
\begin{equation}\label{eq:Varbd}
	\operatorname{Var} F_n(\tilde \eta) \ge C n^{d}
\end{equation}
which yields the variance bounds as in the first assertion. Plugging these bounds above as well as in \eqref{eq:G1}, \eqref{eq:G2} and \eqref{eq:G3} into Theorem \ref{thm:KolBd} now yields the second assertion, completing the proof. \qed

\medskip

\noindent \underline {\em Proof of Theorem \ref{thm:1} for $\eta_\beta'$:} We fix $\X=\R^d \times (-\infty,0)$, equipped with the Euclidean metric on $\R^{d+1}$ and take $\Q(\md v, \md h):=\gamma c'_{d,\beta}(-h)^{-\beta} \md h \md v$
for $\beta>5d+1$ and $\gamma>0$. Here, for condition (A2) in Theorem \ref{thm:KolBd}, we take $p>0$, to be fixed later, and $M_p =1$. Thus, noting $\P(T(y,\eta'_{\beta}+\delta_y)>H)=0$ for $H\ge 0$, we have by \eqref{eq:tailbd} and Proposition \ref{lm:tailbounds} that for $x,y \in \X$,
\begin{align}\label{eq:tailbeta'}
	&\P(x \in R_n(y, \eta_\beta' + \delta_{y})) \nonumber\\
	&\le \ind(v_2 \in W_n)\P\left(T(y, \eta_\beta' + \delta_{y})> h_1 \vee h_2, T(y, \eta_\beta' + \delta_{y}) > h_1 \wedge h_2+\frac{\|v_1 - v_2\|^2}{4} \right)\nonumber\\
	&\le \ind(v_2 \in W_n) \ind \left(h_1 \wedge h_2+\frac{\|v_1 - v_2\|^2}{4}<0 \right) \P(T(y, \eta_\beta + \delta_{y})> h_1 \vee h_2)\nonumber\\
    & \lesssim  \ind(v_2 \in W_n)  \ind \left(h_1 \wedge h_2+\frac{\|v_1 - v_2\|^2}{4}<0 \right)\exp\left(-c' \left(-(h_1 \vee h_2)\right)^{-(\beta-d/2-1)}\right).
    %&\quad \times  \exp\left(-c' \left(-(h_1 \wedge h_2)-\frac{\|v_1 - v_2\|^2}{4}\right)^{-(\beta-d/2-1)}\right).
\end{align}
By a slight abuse of notation, replacing $h \in (-\infty,0)$ by $-h \in (0,\infty)$, we write the integrals below over $\X':=\R^d \times (0,\infty)$ with the correspondingly updated associated measure $\Q'(\md v, \md h):=\gamma c'_{d,\beta} h^{-\beta} \md h \md v$.

With $g$ as in \eqref{eq:g}, integrating w.r.t.\ $v_2$ in the first step, we have for $x \in \X'$ that
\begin{align*}
	g(x) &\lesssim \int_{\X'} \ind(h_1 \vee h_2 > \|v_1 - v_2\|^2/4) \exp\left(-c' \zeta \left(h_1 \wedge h_2\right)^{-(\beta-d/2-1)}\right) \mathbb{Q}'(\md y)\\
	& \lesssim \int_0^\infty (h_1 \vee h_2)^{d/2}  \exp\left(-c' \zeta (h_1 \wedge h_2)^{-(\beta-d/2-1)}\right) h_2^{-\beta} \md h_2\\
	& =  \int_{h_1}^\infty h_2^{d/2-\beta} \exp\left(-c' \zeta h_1^{-(\beta-d/2-1)}\right)  \md h_2 + h_1^{d/2} \int_0^{h_1} \exp\left(-c' \zeta h_2^{-(\beta-d/2-1)}\right) h_2^{-\beta} \md h_2\nonumber\\
	& \lesssim h_1^{-(\beta-d/2-1)} \exp\left(-c' \zeta h_1^{-(\beta-d/2-1)}\right)+ h_1^{d/2} \int_0^{h_1} \exp\left(-c' \zeta h_2^{-(\beta-d/2-1)}\right) h_2^{-\beta} \md h_2 \\
	&\lesssim 1+ h_1^{d/2} \int_0^{h_1} \exp\left(-c' \zeta h_2^{-(\beta-d/2-1)}\right) h_2^{-\beta} \md h_2 \lesssim 1+h_1^{d/2},
	\end{align*}
%	Substituting $h_2^{-(\beta-d/2-1)} = w$, writing $c_\beta:=(\beta+1)/(\beta-d/2-1)$, we obtain
%	\begin{align*}
%	\int_0^{h_1} \exp\left(-c' h_2^{-(\beta-d/2-1)}\right) h_2^{-\beta} \md h_2= \frac{1}{\beta-d/2-1}\int_{h_1^{-(\beta-d/2-1)}}^\infty w^{c_\beta -1} e^{-c'w} \md w
%	\end{align*}
%which is a bounded positive constant. 
where the final step uses the following fact which is a consequence of \eqref{eq:hint}: For any $a>1, b>0$,
\begin{equation}\label{eq:gammaint}
	\int_0^\infty h_2^{-a} \exp\left(-b h_2^{-(\beta-d/2-1)}\right)  \md h_2 <\infty.
\end{equation}
Thus, we have $G(x_1) \lesssim 1+g(x_1)^5 \lesssim 1+ h_1^{5d/2}$. Also, note the following bound: for $h_2>0$ and $a,b \in \R$ with $\beta>\max\{a, b, a+b\} + 1$,
\begin{align}\label{eq:intinwn}
     &\int_0^\infty (h_1 \vee h_2)^{a}  (1+h_1^{b})\exp\left(-c' (h_1 \wedge h_2)^{-(\beta-d/2-1)}\right) h_1^{-\beta} \md h_1 \nonumber\\
	& \lesssim h_2^{a} \int_0^{h_2} (1+h_1^{b}) \exp\left(-c' h_1^{-(\beta-d/2-1)}\right) h_1^{-\beta} \md h_1 \nonumber\\
	& \qquad \qquad \qquad + \int_{h_2}^\infty (1+h_1^{b}) h_1^{a-\beta} \exp\left(-c' h_2^{-(\beta-d/2-1)}\right)  \md h_1 \nonumber\\
	& \lesssim \exp\left(-(c'/2) h_2^{-(\beta-d/2-1)}\right)  \Bigg[h_2^{a} \int_0^{h_2} (1+h_1^{b}) \exp\left(-(c'/2) h_1^{-(\beta-d/2-1)}\right) h_1^{-\beta} \md h_1 \nonumber\\
    & \qquad \qquad+ \int_{h_2}^\infty (1+h_1^{b}) h_1^{a-\beta} \exp\left(-(c'/2) h_1^{-(\beta-d/2-1)}\right)  \md h_1  \Bigg]\nonumber\\
	&\lesssim \exp\left(-(c'/2) h_2^{-(\beta-d/2-1)}\right) ( h_2^{a} + 1),
\end{align}
where the final step is due to \eqref{eq:gammaint} and the fact that $\beta>\max\{a, b, a+b\}+1$.

To bound $f_\alpha^{(1)}(y)$, we first consider the case when $v_2 \in W_{2\sqrt{d}n}$. In this case, arguing similarly as for $g$, integrating w.r.t.\ $v_2$ and using \eqref{eq:intinwn}, we have for $y \in \X'$ with $v_2 \in W_{2\sqrt{d}n}$ that for $\beta>3d+1$
\begin{align*}
	 f_\alpha^{(1)}(y) &\lesssim \int_{\X'} \ind(h_1 \vee h_2 > \|v_1 - v_2\|^2/4) (1+h_1^{5d/2})\exp\left(-c'\alpha \left(h_1 \wedge h_2\right)^{-(\beta-d/2-1)}\right) \mathbb{Q}'(\md x) \nonumber\\
     &\lesssim \int_0^\infty (h_1 \vee h_2)^{d/2}  (1+h_1^{5d/2})\exp\left(-c'\alpha (h_1 \wedge h_2)^{-(\beta-d/2-1)}\right) h_1^{-\beta} \md h_1 \nonumber\\
	&\lesssim \exp\left(-(c'\alpha/2) h_2^{-(\beta-d/2-1)}\right) ( h_2^{d/2} + 1)\nonumber,
\end{align*}
here we trivially bounded $\ind(v_1\in W_n)\leq 1$. Thus, we obtain that
\begin{equation}\label{eq:f1b'}
	\ind(v_2 \in W_{2\sqrt{d}n})f_\alpha^{(1)}(y) \lesssim \ind(v_2 \in W_{2\sqrt{d}n})( h_2^{d/2} + 1) \exp\left(-(c'\alpha/2) h_2^{-(\beta-d/2-1)}\right).
\end{equation}

On the other hand, let $v_2 \in W_{2\sqrt{d}n}^c$. Since $v_1\in W_n$, we have by \eqref{eq:te} that $\|v_1 - v_2\| \ge \|v_2\|/2$. Thus arguing similarly as above, using \eqref{eq:gammaint} in the third step, we obtain for $y \in \X'$ with $v_2 \in W_{2\sqrt{d}n}^c$ and $\beta>5d/2+1$ that
\begin{align*}
	&f_\alpha^{(1)}(y) \lesssim \int_{\X'} \ind(v_1 \in W_n)\ind(h_1 \vee h_2 > \|v_1 - v_2\|^2/4) (1+h_1^{5d/2})\exp\left(-c'\alpha \left(h_1 \wedge h_2\right)^{-(\beta-d/2-1)}\right) \mathbb{Q}'(\md x) \nonumber\\
    &\quad \lesssim n^d \ind(h_2 \ge \|v_2\|^2/16) \int_0^{h_2} \exp\left(-c'\alpha h_1^{-(\beta-d/2-1)}\right) (1+h_1^{5d/2})h_1^{-\beta} \md h_1 \\
	& \qquad\qquad \qquad  + n^d \int_{h_2}^\infty \ind(h_1 \ge \|v_2\|^2/16) (1+h_1^{5d/2}) \exp\left(-c'\alpha h_2^{-(\beta-d/2-1)}\right) h_1^{-\beta}  \md h_1 \\
	& \quad  \lesssim n^d \exp\left(-(c'\alpha/2) h_2^{-(\beta-d/2-1)}\right) \left[\ind(h_2 \ge \|v_2\|^2/16) + (h_2 \vee \|v_2\|^2)^{1-\beta}+ (h_2 \vee \|v_2\|^2)^{5d/2+1-\beta} \right]\\
	&\quad  \lesssim n^d \exp\left(-(c'\alpha/2) h_2^{-(\beta-d/2-1)}\right) \left[\ind(h_2 \ge \|v_2\|^2/16) + (h_2 \vee \|v_2\|^2)^{5d/2+1-\beta} \right]
\end{align*}
for $n$ large enough, where the last step is due to the fact that $\|v_2\| \ge 2\sqrt{d}n$.
Combining the cases for $v_2 \in W_{2\sqrt{d}n}$ and $v_2 \in W_{2\sqrt{d}n}^c$, we obtain
\begin{multline*}
	\mathbb{Q}'f_\alpha^{(1)} \lesssim n^d \int_0^\infty ( h_2^{d/2} + 1) h_2^{-\beta} \exp\left(-(c'\alpha/2) h_2^{-(\beta-d/2-1)}\right)  \md h_2 \\
	  + n^d \int_{W_{2\sqrt{d}n}^c} \int_0^\infty \left[\ind(h_2 \ge \|v_2\|^2/16) + (h_2 \vee \|v_2\|^2)^{5d/2+1-\beta} \right]\\
      \times h_2^{-\beta} \exp\left(-(c'\alpha/2) h_2^{-(\beta-d/2-1)}\right) \md h_2\md v_2.
\end{multline*}
Thus by \eqref{eq:gammaint}, noting that $h_2 \ge \|v_2\|^2/16 \ge d n^2/4$ on $W_{2{\sqrt{d}}n}^c$, we have
\begin{align}\label{eq:b'11}
	\mathbb{Q}'f_\alpha^{(1)} & \lesssim n^d + n^d \int_{dn^2/4}^\infty h_2^{d/2-\beta} \md h_2 \nonumber\\
	& \quad + n^d \int_{W_{2\sqrt{d}n}^c} \int_0^\infty (h_2 \vee \|v_2\|^2)^{5d/2+1-\beta} h_2^{-\beta} \exp\left(-(c'\alpha/2) h_2^{-(\beta-d/2-1)}\right) \md h_2\md v_2 \nonumber\\
	& \lesssim n^d  + n^d \int_{dn^2/4}^\infty h_2^{d/2-\beta} \md h_2 + n^d \int_{4dn^2}^\infty h_2^{d/2+(5d/2+1-\beta)-\beta} \md h_2 \nonumber\\
	& \quad + n^d \int_0^\infty \int_{\|v_2\| \ge n \vee \sqrt{h_2}} \|v_2\|^{2(5d/2+1-\beta)} h_2^{-\beta} \exp\left(-(c'\alpha/2) h_2^{-(\beta-d/2-1)}\right) \md h_2 \md v_2 \nonumber\\
	& \lesssim n^d + n^d \int_{\|v_2\| \ge n} \|v_2\|^{2(5d/2+1-\beta)} \md v_2 \lesssim n^d,
\end{align}
where in the penultimate step, we have divided the last summand into two integrals, corresponding to $h_2 \ge \|v_2\|^2 \ge 4dn^2$ and $h_2 \le \|v_2\|^2$, respectively, and the first, the penultimate and the final steps use the fact that $\beta>3d+1$.  

Similarly, using again that $\beta>3d+1$ for the first as well as the final steps, we obtain
\begin{align}\label{eq:b'12}
	&\mathbb{Q}'(f_\alpha^{(1)})^2 \lesssim n^d \int_0^\infty ( h_2^{d} + 1) h_2^{-\beta} \exp\left(-c'\alpha h_2^{-(\beta-d/2-1)}\right)   \md h_2 \nonumber\\
	& \, + n^{2d} \int_{W_{2\sqrt{d}n}^c} \int_0^\infty \left[ \ind(h_2 \ge \|v_2\|^2/16) + (h_2 \vee \|v_2\|^2)^{2(5d/2+1-\beta)}\right] h_2^{-\beta}  \exp\left(-c'\alpha h_2^{-(\beta-d/2-1)}\right)  \md h_2\md v_2 \nonumber\\
	& \qquad \qquad \lesssim n^d + n^{2d} \int_{dn^2/4}^\infty h_2^{d/2-\beta} \md h_2 \nonumber\\
	& \qquad \qquad \qquad + n^{2d} \int_{4dn^2}^\infty h_2^{d/2+2(5d/2+1-\beta)-\beta} \md h_2 + n^{2d}  \int_{\|v_2\| \ge n} \|v_2\|^{4(5d/2+1-\beta)} \md v_2  \nonumber\\%n^{2d} \int_0^\infty \int_{\|v_2\| \ge n \vee \sqrt{h_2 /c''}} \|v_2\|^{4(1-\beta)} h_2^{-\beta} \md h_2 \md v_2
    & \qquad \qquad \lesssim n^d + n^{3d+1-\beta}+n^{13d+5-6\beta}+n^{13d+4-4\beta}
  \lesssim n^d.
\end{align}

Similarly, for any $\alpha>0$, using \eqref{eq:intinwn} we have for $y \in \X'$ that
\begin{align*}
	f_\alpha^{(2)}(y) &\lesssim \int_{\X'} \ind(v_2 \in W_{n})\ind(h_1 \vee h_2 > \|v_1 - v_2\|^2/4)(1+h_1^{5d/2}) \exp\left(-c'\alpha \left(h_1 \wedge h_2\right)^{-(\beta-d/2-1)}\right) \mathbb{Q}' (\md x)\\
	& \lesssim \ind(v_2 \in W_{n}) \int_{0}^\infty (h_1 \vee h_2)^{d/2} (1+h_1^{5d/2}) \exp\left(-c'\alpha \left(h_1 \wedge h_2\right)^{-(\beta-d/2-1)}\right) h_1^{-\beta} \md h_1\\
	& \lesssim \ind(v_2 \in W_{n}) (h_2^{d/2} + 1) \exp\left(-(c'\alpha/2) h_2^{-(\beta-d/2-1)}\right).
\end{align*}
Upon integrating using \eqref{eq:gammaint}, since $\beta>d+1$, we get
\begin{equation}\label{eq:b'2}
    \mathbb{Q}'f_{\alpha}^{(2)} , \mathbb{Q}'(f_{\alpha}^{(2)} )^2 \lesssim n^d.
\end{equation}

Next, we consider the integral of $f_\alpha^{(3)}$. While the choice of the parameter $p$ didn't matter so far, here, we take $p=32$ to optimize the range of $\beta$ where our result holds. Note in this case, our $\alpha$, which is either $\tau$ or $2\tau$, satisfy $1/5 \le \alpha \le 2/5$. Also, again to optimize the range of $\beta$, we will work with a slightly refined bound on $G$, namely, we will use $G(x) \lesssim 1+ g(x)^{4+\frac{4}{4+p/2}} \lesssim 1+ g(x)^{21/5} \lesssim 1+ h_1^{21d/10}$. 

Arguing similarly as in  \eqref{eq:tailbeta'}, note that $x, y \in R_n(z, \eta_\beta' + \delta_z)$ implies that 
$$
T (z, \eta_\beta' + \delta_z)>-(h_1 \wedge h_2 \wedge h_3), \quad \text{and} \quad h_1 \vee h_3>\frac{\|v_1 - v_3\|^2}{4}, h_2 \vee h_3>\frac{\|v_2 - v_3\|^2}{4}.
$$
We thus obtain for $x,y \in \X'$ that
\begin{align}\label{eq:b'q}
	q(x,y) &\lesssim \int_{\X'} \ind(v_3 \in W_{n}) \ind \left(h_1 \vee h_3>\frac{\|v_1 - v_3\|^2}{4}, h_2 \vee h_3>\frac{\|v_2 - v_3\|^2}{4} \right)\\
	& \qquad \times \exp\left(-c' \left(h_1 \wedge h_2 \wedge h_3\right)^{-(\beta-d/2-1)}\right)\mathbb{Q}'(\md z).
\end{align}
Note that when $h_3 \le h_1 \vee h_2$, the events inside the second indicator in the integrand above imply via the triangle inequality that $\|v_1-v_2\|^2 \lesssim h_1 \vee h_2$. On the other hand, when $h_3 \ge h_1 \vee h_2$, we similarly have $h_3 \gtrsim \|v_1-v_2\|^2$.

Now, let us first consider the case when $v_2 \in W_{2\sqrt{d}n}$. Then, for $x,y \in \X'$ with $v_2 \in W_{2\sqrt{d}n}$, we can use \eqref{eq:gammaint} to bound
\begin{align*}
    &\exp\left((c'/2) \left(h_1 \wedge h_2\right)^{-(\beta-d/2-1)}\right) q(x,y) \\
    &\lesssim \ind(\|v_1 - v_2\|^2 \lesssim h_1 \vee h_2) \int_0^{h_1 \vee h_2} (h_1 \vee h_3)^{d/2} \exp\left(-(c'/2) h_3^{-(\beta-d/2-1)}\right) h_3^{-\beta} \md h_3\\
    & \qquad \qquad \qquad \qquad + \int_{h_1 \vee h_2 \vee \|v_1 - v_2\|^2}^\infty h_3^{d/2-\beta} \md h_3\\
    & \lesssim \ind(\|v_1 - v_2\|^2 \lesssim h_1 \vee h_2) (h_1 \vee h_2)^{d/2} + (h_1 \vee h_2 \vee \|v_1 - v_2\|^2)^{-(\beta-d/2-1)}\\
    & \lesssim \ind(\|v_1 - v_2\|^2 \lesssim h_1 \vee h_2) ((h_1 \vee h_2)^{d/2} + (h_1 \vee h_2)^{-(\beta-d/2-1)}) \\
    & \qquad \qquad \qquad \qquad + \ind(\|v_1 - v_2\|^2 \gtrsim h_1 \vee h_2)(\|v_1 - v_2\|^2)^{-(\beta-d/2-1)}.
\end{align*}
On the other hand, let $v_2 \in W_{2\sqrt{d}n}^c$. Again by \eqref{eq:te} we have $\|v_2- v_3\| \ge \|v_2\|/2$. Note thus that when $h_3 \ge h_1 \vee h_2$, the event inside the indicator in the integrand in \eqref{eq:b'q} imply that $h_3 = h_2 \vee h_3 \gtrsim \|v_2-v_3\|^2 \gtrsim \|v_2\|^2$. This also further imply that $h_3 \gtrsim %\|v_2\|^2 \vee (h_1 \vee h_2) 
\gtrsim \|v_2\|^2 \vee \|v_1 - v_2\|^2 \gtrsim \|v_1\|^2$. First, let $h_1 \le h_2$. On the other hand, when $h_3 \le h_1 \vee h_2$, one has $\|v_2\|^2 \lesssim \|v_2 - v_3\|^2 \lesssim h_2 \vee h_3 = h_2$ which further implies $\|v_1\|^2 \lesssim \|v_2\|^2 \vee (h_1 \vee h_2) \lesssim h_1 \vee h_2$. Then the above observations yield
\begin{align*}
	&\exp\left((c'/2) \left(h_1 \wedge h_2\right)^{-(\beta-d/2-1)}\right) q(x,y) \\
	&\lesssim \ind(\|v_1\|^2, \|v_2\|^2 \lesssim h_2) \Bigg[h_1^{d/2}\int_0^{h_1} \exp\left(-(c'/2) h_3^{-(\beta-d/2-1)}\right) h_3^{-\beta} \md h_3 \\
	& \qquad + \int_{h_1}^{h_2} \exp\left(-(c'/2) h_3^{-(\beta-d/2-1)}\right) h_3^{d/2-\beta} \md h_3\Bigg] +  n^d \int_{h_2 \vee \|v_1\|^2 \vee \|v_2\|^2}^\infty h_3^{-\beta}  \md h_3\\
	& \lesssim (1+h_1^{d/2}) \ind(\|v_1\|^2, \|v_2\|^2 \lesssim h_2)  + n^d (h_2 \vee \|v_1\|^2 \vee \|v_2\|^2)^{-\beta+1}.
\end{align*}
where in the final step, we have used \eqref{eq:gammaint}. Since a similar argument holds when $h_1 >h_2$, for any $h_1, h_2 \ge 0$ and $x,y \in \X'$ with $v_2 \in W_{2\sqrt{d}n}^c$, we have
\begin{multline*}
	\exp\left((c'/2) \left(h_1 \wedge h_2\right)^{-(\beta-d/2-1)}\right) q(x,y) \\
	\lesssim [1+(h_1 \wedge h_2)^{d/2}] \ind(\|v_1\|^2, \|v_2\|^2 \lesssim h_1 \vee h_2)  + n^d (h_1 \vee h_2 \vee \|v_1\|^2 \vee \|v_2\|^2)^{-\beta+1}.
\end{multline*}
Combining the cases for $v_2 \in W_{2\sqrt{d}n}$ and $v_2 \in W_{2\sqrt{d}n}^c$, we thus have for any $y \in \X'$ that
\begin{align}\label{eq:f3split}
    &f_\alpha^{(3)}(y) \lesssim  \ind(v_2 \in W_{2\sqrt{d}n}) \Bigg[ \int (h_1 \vee h_2)^{\frac{(\alpha+1)d}{2}} \exp\left(-(\alpha c'/2) \left(h_1 \wedge h_2\right)^{-(\beta-d/2-1)}\right)(1+ h_1^{\frac{21d}{10}})h_1^{-\beta}  \md h_1 \nonumber\\
    &\quad +\int (h_1 \vee h_2)^{(\alpha+1)d/2-\alpha \beta+\alpha} h_1^{-\beta} \exp\left(-(\alpha c'/2) \left(h_1 \wedge h_2\right)^{-(\beta-d/2-1)}\right)(1+ h_1^{21d/10}) \md h_1 \nonumber\\
    &\quad + \int \ind(\|v_1 - v_2\|^2 \gtrsim h_1 \vee h_2)(\|v_1 - v_2\|^2)^{-\alpha(\beta-d/2-1)} h_1^{-\beta}\nonumber\\
    &\qquad \qquad \qquad \qquad \times \exp\left(-(\alpha c'/2) \left(h_1 \wedge h_2\right)^{-(\beta-d/2-1)}\right)(1+ h_1^{21d/10}) \Q'(\md x) \Bigg] \nonumber\\
    &\quad +\ind(v_2 \in W_{2\sqrt{d}n}^c) \Bigg[\int \ind(\|v_1\|^2, \|v_2\|^2 \lesssim h_1 \vee h_2) [1+(h_1 \wedge h_2)^{\alpha d/2}] \nonumber\\
    & \qquad \qquad \qquad \qquad\times \exp\left(-(\alpha c'/2) \left(h_1 \wedge h_2\right)^{-(\beta-d/2-1)}\right)(1+ h_1^{21d/10}) \Q'(\md x) \nonumber\\
    & + n^{\alpha d}\int (h_1 \vee h_2 \vee \|v_1\|^2 \vee \|v_2\|^2)^{\alpha(-\beta+1)} \exp\left(-(\alpha c'/2) \left(h_1 \wedge h_2\right)^{-(\beta-d/2-1)}\right)(1+ h_1^{21d/10}) \Q'(\md x)\Bigg].
\end{align}
For simplicity, we bound the integral of $f_{\alpha}^{(3)}$ for the two cases $v_2 \in W_{2\sqrt{d}n}$ and $v_2 \in W_{2\sqrt{d}n}^c$ separately below.

\noindent \underline{Case 1 ($v_2 \in W_{2\sqrt{d}n}$):} Using \eqref{eq:intinwn}, for $\beta-1 >( \frac{21}{5} + (\alpha+1)) d/2$ (sufficient to have $\beta>3d+1$, since $\alpha \le 2/5$), the first summand on the r.h.s.\ in \eqref{eq:f3split} can be bounded (up to constants) by
$$
\exp\left(-c'' h_2^{-(\beta-d/2-1)}\right) ( h_2^{(\alpha+1)d/2} + 1).
$$
Similarly, since $\beta-1>(\alpha+1)d/2-\alpha \beta+ \alpha+ (21/10)d$ for $\beta>3d+1$, applying \eqref{eq:intinwn} shows that the second summand in \eqref{eq:f3split} is bounded (up to constants) by
$$
\exp\left(-c'' h_2^{-(\beta-d/2-1)}\right) ( h_2^{(\alpha+1)d/2-\alpha \beta+\alpha} + 1).
$$
Hence combined together, the sum of the first two summands in \eqref{eq:f3split} is bounded (up to constants) by $$\exp\left(-c'' h_2^{-(\beta-d/2-1)}\right) ( h_2^{(\alpha+1)d/2-\alpha \beta+\alpha} + h_2^{(\alpha+1)d/2} + 1) \lesssim \exp\left(-c'' h_2^{-(\beta-d/2-1)}\right) ( h_2^{(\alpha+1)d/2} + 1).
$$

Finally, for the third summand in \eqref{eq:f3split}, integrating over $v_1$, since $\beta>3d+1$, we note that
\begin{multline}\label{eq:alpha}
\int \ind(\|v_1 - v_2\|^2 \gtrsim h_1 \vee h_2)(\|v_1 - v_2\|^2)^{-\alpha(\beta-d/2-1)} \md v_1 \lesssim \int_{\sqrt{h_1 \vee h_2}}^\infty r^{-2\alpha(\beta-d/2-1)} r^{d-1} \md r\\
= \int_{\sqrt{h_1 \vee h_2}}^\infty r^{-2\alpha(\beta-d/2-d/(2\alpha)-1) -1} \md r \lesssim (h_1 \vee h_2)^{\alpha\left(\frac{\alpha+1}{2\alpha}d+1-\beta\right)}.
\end{multline}
Thus, noting that $\beta-1>3d \ge (\alpha+1)d /(2\alpha)$ and bounding this term again using \eqref{eq:intinwn} with $(\alpha+1)d/2-\alpha\beta+\alpha+21d/10<\beta-1$ for $\beta>3d+1$ and $1/5\leq \alpha\leq 2/5$, the overall contribution to $f_\alpha^{(3)}(y)$ in \eqref{eq:f3split} when $v_2 \in W_{2\sqrt{d}n}$ is bounded (up to constants) by
\begin{multline*}
     \exp\left(-c'' h_2^{-(\beta-d/2-1)}\right) ( h_2^{\frac{(\alpha+1)d}{2}} + h_2^{\alpha\left(\frac{\alpha+1}{2\alpha}d+1-\beta\right)}+ 1) \\
     \lesssim \exp\left(-c'' h_2^{-(\beta-d/2-1)}\right) ( h_2^{\frac{(\alpha+1)d}{2}} + h_2^{\alpha\left(\frac{\alpha+1}{2\alpha}d+1-\beta\right)}),
\end{multline*}
where we have used that since the two exponents of $h_2$ in the bound above are of opposite sign, one of these two terms dominate the sum depending on the value of $h_2 \in (0,\infty)$.
Thus, upon integration and using \eqref{eq:gammaint} noting $\beta>3d+1$, we obtain
\begin{equation}\label{eq:b'31}
    \int_{W_{2\sqrt{d}n} \times (0,\infty)} f_\alpha^{(3)}(y)\Q'(\md y),  \int_{W_{2\sqrt{d}n} \times (0,\infty)} f_\alpha^{(3)}(y)^2\Q'(\md y) \lesssim n^d.
\end{equation}

\noindent \underline{Case 2 ($v_2 \in W_{2\sqrt{d}n}^c$):} For the fourth summand on the r.h.s.\ of the bound in \eqref{eq:f3split}, integrating w.r.t.\ $v_1$ in the first step, we obtain for $\beta>3d+1$
\begin{align*}
	&\int \ind(\|v_2\|^2 \lesssim h_1 \vee h_2) (h_1 \vee h_2)^{d/2} (1+(h_1 \wedge h_2)^{\alpha d/2})\\
	& \qquad \qquad \qquad \times \exp\left(-(\alpha c'/2) \left(h_1 \wedge h_2\right)^{-(\beta-d/2-1)}\right)(1+ h_1^{21d/10}) h_1^{-\beta} \md h_1\\
	& \lesssim h_2^{d/2} \ind(\|v_2\|^2 \lesssim h_2) \int_0^{h_2} \exp\left(-(\alpha c'/2) h_1^{-(\beta-d/2-1)}\right)(1+ h_1^{21d/10}) (1+ h_1^{\alpha d/2}) h_1^{-\beta} \md h_1\\
	& \qquad \qquad \qquad +(1+h_2^{\alpha d/2}) \exp\left(-(\alpha c'/2) h_2^{-(\beta-d/2-1)}\right) \int_{\|v_2\|^2 \vee h_2}^\infty (1+ h_1^{21d/10}) h_1^{-\beta + d/2} \md h_1\\
	& \lesssim \exp\left(-c'' h_2^{-(\beta-d/2-1)}\right) \Bigg[h_2^{d/2}\ind(\|v_2\|^2 \lesssim h_2) + (1+h_2^{\alpha d/2})  (\|v_2\|^2 \vee h_2)^{-(\beta-13d/5-1)}\Bigg]\\
	%& \lesssim \exp\left(-c'' h_2^{-(\beta-d/2-1)}\right) \Bigg[(h_2^{d/2} + h_2^{3d+1-\beta})\ind(\|v_2\|^2 \lesssim h_2) + \ind(\|v_2\|^2 \gtrsim h_2) (1+h_2^{\alpha d/2})  \|v_2\|^{-2(\beta-13d/5-1)}\Bigg]\\
	& \lesssim \exp\left(-c'' h_2^{-(\beta-d/2-1)}\right) \Bigg[h_2^{d/2} \ind(\|v_2\|^2 \lesssim h_2) + \ind(\|v_2\|^2 \gtrsim h_2) (1+h_2^{\alpha d/2})  \|v_2\|^{-2(\beta-13d/5-1)}\Bigg],
\end{align*}
where in the penultimate step, in addition to \eqref{eq:gammaint}, we have also used for the second integral that $h_1 \ge \|v_2\|^2 \ge 1$. 

Finally, we consider the last summand in \eqref{eq:f3split}, i.e.,
\begin{align*}
	&n^{\alpha d}\int (h_1 \vee h_2 \vee \|v_1\|^2 \vee \|v_2\|^2)^{\alpha(-\beta+1)} \exp\left(-(\alpha c'/2) \left(h_1 \wedge h_2\right)^{-(\beta-d/2-1)}\right)(1+ h_1^{21d/10}) \Q'(\md x).
	%&\lesssim n^{\alpha d} \|v_2\|^{\alpha(-\beta+1)} \int (h_1 \vee h_2 \vee \|v_1\|^2)^{\frac{\alpha(-\beta+1)}{2}} \exp\left(-(\alpha c'/2) \left(h_1 \wedge h_2\right)^{-(\beta-d/2-1)}\right)(1+ h_1^{21d/10}) \Q'(\md x).
\end{align*}
Now, splitting the integral into two parts, first for the case $\|v_1\|^2 \le h_1 \vee h_2 \vee \|v_2\|^2$ and the second one for the case $\|v_1\|^2 > h_1 \vee h_2 \vee \|v_2\|^2$, we can upper bound the integral above by 
\begin{align*}
	& n^{\alpha d} \Bigg[\int (h_1 \vee h_2\vee \|v_2\|^2)^{d/2 + \alpha(-\beta+1)} \exp\left(-(\alpha c'/2) \left(h_1 \wedge h_2\right)^{-(\beta-d/2-1)}\right)(1+ h_1^{21d/10}) h_1^{-\beta} \md h_1\\
	&+ \int (h_1 \vee h_2\vee \|v_2\|^2)^{1/5(5d/2+1-\beta)}\exp\left(-(\alpha c'/2) \left(h_1 \wedge h_2\right)^{-(\beta-d/2-1)}\right)(1+ h_1^{21d/10}) h_1^{-\beta} \md h_1\Bigg]\\
	& \lesssim n^{\alpha d}  \int (h_1 \vee h_2\vee \|v_2\|^2)^{1/5(5d/2+1-\beta)}\exp\left(-(\alpha c'/2) \left(h_1 \wedge h_2\right)^{-(\beta-d/2-1)}\right)(1+ h_1^{21d/10}) h_1^{-\beta} \md h_1,
\end{align*}
where to obtain the second summand above when $\|v_1\|^2 > h_1 \vee h_2 \vee \|v_2\|^2$, we have used (argument similar to \eqref{eq:alpha} noting $\alpha>1/5 $ and $\beta>5d/2+1$) that
\begin{multline*}
\int \ind(\|v_1\| \ge \sqrt{h_1 \vee h_2 \vee \|v_2\|^2}) \|v_1\|^{2\alpha(-\beta+1)} \md v_1 \\
 \lesssim \int_{\sqrt{h_1 \vee h_2 \vee \|v_2\|^2}}^\infty r^{-\frac{2}{5}(\beta-5d/2-1) -1} \md r \lesssim (h_1 \vee h_2 \vee \|v_2\|^2)^{\frac{1}{5}(5d/2+1-\beta)}.
\end{multline*}
Further, using \eqref{eq:intinwn}and \eqref{eq:gammaint}, we can bound
\begin{align*}
	& n^{\alpha d}  \int (h_1 \vee h_2\vee \|v_2\|^2)^{1/5(5d/2+1-\beta)}\exp\left(-(\alpha c'/2) \left(h_1 \wedge h_2\right)^{-(\beta-d/2-1)}\right)(1+ h_1^{21d/10}) h_1^{-\beta} \md h_1\\
	& \lesssim n^{\alpha d} \exp\left(-c'' h_2^{-(\beta-d/2-1)}\right)\Bigg[\ind(\|v_2\|^2 \le h_2) (1+h_2^{1/5(5d/2+1-\beta)}) + \ind(\|v_2\|^2 > h_2) \|v_2\|^{2/5(5d/2+1-\beta)}\Bigg]\\
	& \lesssim n^{\alpha d} \exp\left(-c'' h_2^{-(\beta-d/2-1)}\right)\Bigg[\ind(\|v_2\|^2 \le h_2) + \ind(\|v_2\|^2 > h_2) \|v_2\|^{2/5(5d/2+1-\beta)}\Bigg].
\end{align*}
Thus we obtain that the total contribution to $f_\alpha^{(3)}(y)$ in \eqref{eq:f3split} when $v_2 \in W_{2\sqrt{d}n}^c$ is bounded (up to constants) by
\begin{align}\label{eq:integ5d}
	&\exp\left(-c'' h_2^{-(\beta-d/2-1)}\right) \Bigg[h_2^{d/2} \ind(\|v_2\|^2 \lesssim h_2) + \ind(\|v_2\|^2 \gtrsim h_2) (1+h_2^{\alpha d/2})  \|v_2\|^{-2(\beta-13d/5-1)}\Bigg] \nonumber\\
	&\quad + n^{\alpha d} \exp\left(-c'' h_2^{-(\beta-d/2-1)}\right)\Bigg[\ind(\|v_2\|^2 \le h_2) + \ind(\|v_2\|^2 > h_2) \|v_2\|^{2/5(5d/2+1-\beta)}\Bigg].
\end{align}
Thus, we can bound $\Q' f_\alpha^{(3)}$ when $v_2 \in W_{2\sqrt{d}n}^c$ and $\beta>5d+1$ by
\begin{align*}
    &\int (h_2^{d-\beta}+n^{\alpha d}h_2^{d/2-\beta}) \exp\left(-c'' h_2^{-(\beta-d/2-1)}\right) \md h_2 \\
    &\qquad + \int (1+h_2^{\alpha d/2}) h_2^{-(\beta -21d/10 -1)} \exp\left(-c'' h_2^{-(\beta-d/2-1)}\right) h_2^{-\beta} \md h_2\\
    & \qquad \qquad +n^{\alpha d}\int h_2^{-\frac{1}{5}(\beta -5d -1)} \exp\left(-c'' h_2^{-(\beta-d/2-1)}\right) h_2^{-\beta}\md h_2 \lesssim 1+n^{\alpha d},
\end{align*}
where we have used again that for $a>d$,
$$
\int_{\|v_2\| \gtrsim \sqrt{h_2}} \|v_2\|^{-a} \lesssim \int_{\sqrt{h_2}}^\infty r^{d-a-1} \md r \lesssim h_2^{\frac{d-a}{2}}.
$$
The argument to bound $\Q' (f_\alpha^{(3)})^2$ when $v_2 \in W_{2\sqrt{d}n}^c$ and $\beta>5d+1$ can be argued similarly to obtain in this case the bound
\begin{align*}
    &\int (h_2^{3d/2-\beta}+n^{2\alpha d}h_2^{d/2-\beta}) \exp\left(-2c'' h_2^{-(\beta-d/2-1)}\right) \md h_2 \\
    &\qquad + \int (1+h_2^{\alpha d}) h_2^{-(\beta -57d/20 -1)} \exp\left(-2c'' h_2^{-(\beta-d/2-1)}\right) h_2^{-\beta} \md h_2\\
    & \qquad \qquad +n^{2\alpha d}\int h_2^{-\frac{1}{5}(\beta -5d -1)} \exp\left(-2c'' h_2^{-(\beta-d/2-1)}\right) h_2^{-\beta}\md h_2 \lesssim 1+n^{2\alpha d}.
\end{align*}
Combining these bounds with \eqref{eq:b'31}, for $\beta>5d+1$, we obtain 
\begin{equation}\label{eq:b'3}
    \mathbb{Q}' f_{\alpha}^{(3)}, \mathbb{Q}' (f_{\alpha}^{(3)})^2 \lesssim n^d.
\end{equation}

We next bound $\mathbb{Q} \kappa^\alpha$. From Proposition \ref{lm:tailbounds}, for $x=(v_1,h_1)$ with $h_1 <0$, we have
\begin{align*}
	\kappa(x)&=\P(\xi_n((v_1,h_1),\eta_\beta' \cup \{(v_1,h_1)\}) \neq 0) \\
	&\le \ind(v_1 \in W_n)\P(T((v_1,h_1),\eta_\beta' \cup \{(v_1,h_1)\}) >h_1)\\
	&\lesssim \ind(v_1 \in W_n) \exp\big(-c(-h_1)^{-(\beta-d/2-1)}\big).
\end{align*}
Thus, noting that $G(x) \lesssim 1+ h_1^{21d/10}$, integration using \eqref{eq:gammaint} yields $\mathbb{Q} \kappa^\alpha G\lesssim n^d$ for $\beta>21d/10+1$. Noting that $g \lesssim f_\zeta^{(1)}$, and that $\alpha \in (0,1)$, we can argue as for $\mathbb{Q} (f_\alpha^{(1)})^2$ in \eqref{eq:b'12} to conclude that $\mathbb{Q} g^\alpha G\lesssim n^d$ for $\beta>5d+1$. Thus, by Theorem \ref{thm:KolBd},  $\Var(F_n(\eta_\beta)) \lesssim n^d$ for $\beta>5d+1$.

On the other hand, by Proposition \ref{prop:VarianceBound} there exists a constant $C\in (0,\infty)$ depending only on $d, \beta$ such that for all $n \in \N$,
\begin{equation}\label{eq:Varbdb'}
	\operatorname{Var} F_n(\eta_\beta') \ge C n^{d}.
\end{equation}
This proves the first assertion. The second assertion follows by Theorem \ref{thm:KolBd} upon plugging in the the bounds above as well as those in \eqref{eq:b'11}, \eqref{eq:b'12}, \eqref{eq:b'2} and \eqref{eq:b'3}.

The optimality of the bound on the Kolmogorov distance can be argued as in the proof of \cite[Theorem~1.1, Eq.~(1.6)]{EG81}, which shows that the Kolmogorov distance between any integer-valued random variable, normalized, and a standard normal random variable is always lower bounded by a constant times the inverse of the standard deviation (see Section~6 therein for further details). The variance upper bounds in Theorem \ref{thm:1} now yields the result.

\section*{Acknowledgments}

AG and CB were supported by the DFG priority program SPP 2265 \textit{Random Geometric Systems}. AG was additionally supported by Germany's Excellence Strategy  EXC 2044 -- 390685587, \textit{Mathematics M\"unster: Dynamics - Geometry - Structure}.

\end{document}